\newtheorem{thm}{Theorem}[section]
\newtheorem{prop}{Proposition}[section]
\newtheorem{lem}{Lemma}[section]
\newtheorem{rem}{Remark}[section]
\newtheorem{defn}{Definition}[section]
\newtheorem{exm}{Example}[section]
\newtheorem{ass}{Assumption}[section]
\numberwithin{equation}{section}
\begin{document}
\date{}
\pagestyle{plain}
\title{Continuous-time Markov Decision Processes with Finite-horizon Expected Total Cost Criteria}
\author{Qingda Wei$^1$, \   Xian Chen$^2$\thanks{The corresponding author.} \thanks{Wei's  email:   weiqd@hqu.edu.cn;  Chen's email:  chenxian@amss.ac.cn}
  \   \\
  $^1$School of Economics and Finance, \\
  Huaqiao University, Quanzhou, 362021, P.R. China\\
  $^2$School of Mathematical Sciences,\\
Peking University, Beijing, 100871, P.R. China}
\date{}
\maketitle \underline{}
\begin{abstract}
This paper deals with the unconstrained and constrained cases for continuous-time Markov decision processes under the finite-horizon expected total cost criterion. The state space is denumerable and the transition and cost rates are allowed to be unbounded from above and from below. We give conditions for
the existence of optimal policies in the class of all randomized history-dependent policies. For the unconstrained case, using the analogue of the forward Kolmogorov equation in the form of conditional expectation, we show that the finite-horizon optimal value function is the unique solution to the optimality equation and obtain the existence of an optimal deterministic Markov policy. For the constrained case, employing  the technique of occupation measures, we first give an equivalent characterization of the occupation measures, and derive  that for each occupation measure generated by a randomized history-dependent policy, there exists an occupation measure
  generated by a randomized Markov policy equal to  it.
 Then using the compactness and convexity of the set of all occupation measures,  we obtain the existence of a constrained-optimal randomized Markov policy. Moreover, the constrained optimization problem is reformulated as a linear program,  and the strong duality between the linear program  and its dual program is  established.    Finally, a controlled birth and death system is used to illustrate our main results.

\vskip 0.2 in \noindent{\bf Keywords.}  Continuous-time Markov decision processes;
finite-horizon   criterion;  unbounded transition rates;   occupation measure; history-dependent policies.

\vskip 0.2 in \noindent {\bf Mathematics Subject Classification.}
93E20, 90C40
\end{abstract}
\setlength{\baselineskip}{0.25in}

\section{Introduction} \label{intro}
 Continuous-time Markov decision processes (CTMDPs) have been applied in many areas, such as queueing systems, epidemiology, and telecommunication; see, for instance, \cite{guo09,kit95,put} and the references therein. The optimality criteria for CTMDPs can be classified into the finite-horizon and  infinite-horizon criteria. As we can see in the existing literature,  the infinite-horizon criteria have been widely studied by many authors; see, for instance,  \cite{guo13,guo09,guo11,gs11,guo12,pi11,put} and their extensive references.
Comparing with the infinite-horizon criteria, there exist few works on the finite-horizon criteria for CTMDPs, whose treatment is more difficult than that of infinite-horizon criteria.
On the other hand, as we know, the finite-horizon criteria for discrete-time MDPs have found rich applications to portfolio investment,  inventory management, highway pavement maintenance, etc.;  see, for instance,  \cite{put,bau,dyn}.
In view of applications, sometimes it is more suitable to formulate the optimization models with finite-horizon criteria  than those  with infinite-horizon criteria.
We study the unconstrained and constrained cases for CTMDPs under the finite-horizon expected total cost criterion in this paper. Our main goals are as follows:
\begin{itemize}
\item[(i)]  Give conditions for the existence of optimal policies in the class of all randomized history-dependent policies for the unconstrained and constrained cases;
\item[(ii)] Formulate the constrained optimization problem  as a linear program;
\item[(iii)]  Establish the  strong duality between the primal linear program  and its dual program.
\end{itemize}

 For the unconstrained case,  we briefly describe the previous literature on the finite-horizon criteria for CTMDPs. The existence of a solution to the optimality equation is established in \cite{mill} for finite states and finite actions, in \cite{bau} for bounded transition rates and denumerable states, in \cite{gih,pli,yush} for bounded transition rates and Borel state spaces, in \cite{van} for unbounded transition rates and denumerable states, and in \cite{w} for unbounded transition rates and Borel spaces. It should be noted that all the aforementioned works restrict the discussions of the finite-horizon optimization problems  to  the class of all Markov policies. However, the decision-makers may make decisions basing on the past information. To consider  the past information,   the definition of a randomized history-dependent policy has been introduced in \cite{guo11,kit95,gs11,guo12,guo13,pi11} to study the infinite-horizon criteria for CTMDPs.

 In this paper we  discuss the finite-horizon criteria  with the  randomized history-dependent policies.  The state space is  denumerable and the action space is a Polish space. The transition and cost rates are allowed to be unbounded from above and from below. The dynamic programming approach
is used to prove the existence of   optimal policies  under the  suitable  conditions. We first give a new estimation of the weight function in the form of conditional expectation induced by  the randomized history-dependent policies, which extends the  results in \cite{guo09,pi11,guo11,gs11,guo13,guo12} (see Theorem \ref{thm4.1}). It should be mentioned that the extension of this  estimation is nontrivial. Then we derive
the analogue of the forward Kolmogorov equation in the form of conditional expectation by a technique of the dual predictable projection (see Theorem \ref{thm4.2}). Finally, applying the analogue of the forward Kolmogorov equation, we show that the finite-horizon optimal value function is the unique solution to the optimality equation and obtain the existence of an optimal deterministic Markov policy, which  extend the results in \cite{gih,pli,yush,w,van,bau,mill} from the class of all  Markov policies  to the more general class of all randomized history-dependent policies (see Theorem \ref{thm4.4}). It is worthy to point out that since the controlled state process does not have  the Markov property under any randomized history-dependent policy and the finite-horizon optimal value function includes a time variable,  the analyses are more difficult and complicated  than those of  the infinite-horizon criteria with the randomized history-dependent policies and the finite-horizon criteria with the Markov policies.
Moreover,  the fixed point theorem and uniformization techniques are inapplicable to the case of unbounded transition rates.

For the constrained case, the optimality criterion to be minimized is the finite-horizon expected total costs, and the constraints are imposed on the similar finite-horizon expected total costs. We employ the  convex analytic approach by introducing the occupation measures of the finite-horizon criteria. Under suitable conditions, we
give an equivalent characterization of the occupation measures and show that  the set of all occupation measures is convex, compact, and metrizable in the $w$-weak topology (see Theorem \ref{thm5.1}). From this equivalent characterization, we conclude that for each occupation measure generated by a randomized history-dependent policy, there exists an  occupation measure generated by
a randomized Markov policy   equal to it. Moreover, the constrained optimization problem can be reformulated as   a linear program.  Applying the Weierstrass theorem, we obtain the existence of a constrained-optimal randomized Markov policy (see Theorem \ref{thm5.2}). Finally, we develop the dual program of the linear program, and establish the strong duality between the primal linear program and its dual program  (see Theorem \ref{thm5.3}).

The rest of this paper is organized as follows. In Section
2,  we introduce the control model and   optimization problem. In Section 3, we give optimality  conditions for the existence
of optimal  policies and some preliminary results. The main results  for the unconstrained and constrained optimization problems are presented in Sections 4 and 5, respectively.
In Section  6, we  illustrate our main
results with   a controlled birth and death system.
\section{The control  model and optimization problem} \label{sec2}
The primitive data of the control model  in this paper are as follows:
$$\{S, A, (A(i), i\in S), q(j|i,a), c_0(i,a)(c_n(i,a), d_n, 1\leq n\leq N)\},$$
where the state space $S$ is assumed to be a denumerable set endowed with discrete topology  and the action space $A$ is assumed to be a Polish space with Borel $\sigma$-algebra $\mathcal{B}(A)$. $A(i)\in \mathcal{B}(A)$ denotes the set of  admissible actions when the state of the system is $i\in S$. Define $K:=\{(i,a)|\ i\in S, a\in A(i)\}$ which contains all the feasible state-action pairs.
The transition rates $q(j|i,a)$ are supposed to satisfy the following properties:
\begin{itemize}
\item For each fixed $i,j\in S$, $q(j|i,a)$ is measurable in $a\in A(i)$;
\item $q(j|i,a)\geq0$ for all $(i,a)\in K$ and  $j\neq i$;
\item $\sum_{j\in S} q(j|i,a)=0$ for all $(i,a)\in K$;
\item $q^*(i):=\sup_{a\in A(i)}|q(i|i,a)|<\infty$ for all $i\in S$.
\end{itemize}
Finally, the real-valued cost functions  $c_n(i,a)$ $(0\leq n\leq N)$ on $K$ are assumed to be measurable in $a\in A(i)$ for each $i\in S$ and the real numbers $d_n$ $(1\leq n\leq N)$ denote the constraints imposed on the finite-horizon expected total costs.

Let $S_{\infty}:=S\cup\{i_{\infty}\}$ with an isolated point $i_{\infty}\notin S$,  $\mathbb{R}_+:=(0,+\infty)$, $\mathbb{R}_+^0:=[0,+\infty)$,  $\Omega^0:=(S\times \mathbb{R}_+)^\infty$, and
$\Omega:=\Omega^0\cup\{(i_0, \theta_1, i_1,\ldots, \theta_{m-1}, i_{m-1}, \infty,i_{\infty},\infty, i_{\infty}, \ldots)|\ i_0\in S, \ i_l\in S,\ \theta_{l}\in \mathbb{R}_+  \  {\rm for \ each} \ 1\leq l\leq m-1,\  m\geq2\}$. Hence, we obtain a measurable space $(\Omega, \mathcal{F})$ in which  $\mathcal{F}$ is  the standard Borel $\sigma$-algebra.
Define the maps on $(\Omega,\mathcal{F})$ below: for each $\omega=(i_0, \theta_1, i_1,\ldots)\in \Omega$, let $T_0(\omega):=0,$ $X_0(\omega):=i_0$; for $m\geq 1$, let  $\Theta_m(\omega):=\theta_m,$  $X_m(\omega):=i_m$, $T_m(\omega):=\theta_1+\theta_2+\cdots+\theta_m$, $T_{\infty}(\omega):=\lim\limits_{m\to\infty}T_m(\omega)$, and
$$\xi_t(\omega):=\sum_{m\geq0}I_{\{T_m\leq t<T_{m+1}\}}i_m+I_{\{T_{\infty}\leq t\}}i_{\infty} \ \ {\rm for \ all}  \ t\geq0,$$
where $I_D$ denotes the indicator function of a set $D$. $\{T_m\}_{m\geq0}$ are the jump epochs, and $X_m$ is the state of the process $\{\xi_t,t\geq0\}$ on $[T_m, T_{m+1})$. Since we do not intend to consider the process after $T_{\infty}$, it is regarded to be absorbed in the state $i_{\infty}$. Hence, we write $q(i_{\infty}|i_{\infty},a_{\infty})=0$, where $a_{\infty}$ is an isolated point.
Moreover, we set  $A_{\infty}:=A\cup\{a_{\infty}\}$, $A(i_{\infty}):=\{a_{\infty}\}$,  $\mathcal{F}_t:=\sigma(\{T_m\leq s, X_m=j\}: j\in S, s\leq t, m\geq0)$ for all $t\geq0$, $\mathcal{F}_{s-}:=\bigvee_{0\leq t<s}\mathcal{F}_t$ (i.e.,  the smallest $\sigma$-algebra containing all the $\sigma$-algebras $\{\mathcal{F}_t, 0\leq t<s\}$), and  $\mathcal{P}:=\sigma(B\times\{0\}(B\in \mathcal{F}_0), B\times (s,\infty)(B\in\mathcal{F}_{s-}, s>0))$ which is the $\sigma$-algebra of predictable sets on $\Omega\times \mathbb{R}_+^0$ with respect to $\{\mathcal{F}_t\}_{t\geq0}$.

To define the optimality criteria, we introduce the definition of a policy below.
\begin{defn}
  {\rm A  $\mathcal{P}$-measurable transition probability $\pi(\cdot|\ \omega, t)$ on $(A_{\infty}, \mathcal{B}(A_{\infty}))$, concentrated on $A(\xi_{t-}(\omega))$, is called
  a randomized history-dependent policy. A policy is called randomized Markov if  there exists a kernel $\varphi$ on $A_{\infty}$ given $S_{\infty}\times \mathbb{R}_+^0$ such that $\pi(\cdot|\ \omega,t)=\varphi(\cdot| \xi_{t-}(\omega), t)$. A policy is called deterministic Markov if there exists a measurable function $f$ on  $S_{\infty}\times \mathbb{R}_+^0$ with $f(i,t)\in A(i)$ for all $(i,t)\in S_{\infty}\times \mathbb{R}_+^0$,  such that $\pi(\cdot|\ \omega, t)=\delta_{f(\xi_{t-}(\omega), t)}(\cdot)$, where $\delta_x(\cdot)$ is a Dirac measure concentrated at  $x$.
  }
\end{defn}
We denote by $\Pi$ the set of all randomized history-dependent policies, by $\Pi^M$ the set of
all randomized Markov policies,  and by  $\Pi^D$ the set of all deterministic Markov policies.

For any $\pi\in \Pi$, we define the random measure
\begin{eqnarray}\label{nu}
\nu^{\pi}(\omega, dt,j):=\left[\int_A\pi(da|\omega,t)q(j|\xi_{t-}(\omega),a)I_{\{j\neq \xi_{t-}(\omega)\}}\right]dt
\end{eqnarray}
for any  $j\in S$. Then, we have that this random measure is predictable, and $\nu^{\pi}(\omega, \{t\}\times S)=\nu^{\pi}(\omega, [T_{\infty},\infty)\times S)=0$. Hence, for any $\pi\in\Pi$ and
any initial distribution $\gamma$ on $S$, by Theorem 4.27 in \cite{kit95}, there exists a unique probability measure $P_{\gamma}^{\pi}$ on $(\Omega,\mathcal{F})$ such that $P_{\gamma}^{\pi}(\xi_0=i)=\gamma(i)$,  and with respect to $P_{\gamma}^{\pi}$,  $\nu^{\pi}$ is
the dual predictable projection of  random measure on $\mathbb{R}_+\times S$
\begin{eqnarray}\label{mu}
\mu(\omega, dt,i)=\sum_{m\geq1}I_{\{T_m<\infty\}}I_{\{X_m=i\}}\delta_{T_m}(dt).
\end{eqnarray}
Therefore, we obtain a stochastic basis $(\Omega, \mathcal{F}, \{\mathcal{F}_t\}_{t\geq0}, P_{\gamma}^{\pi})$, which is always assumed to be complete.  When $\gamma(j)=\delta_i(j)$ for all $j\in S$,  we write $P^{\pi}_{\gamma}$ as $P^{\pi}_i$. The expectation operators  with respect to $P^{\pi}_{\gamma}$ and $P^{\pi}_{i}$ are denoted as $E^{\pi}_{\gamma}$ and $E^{\pi}_{i}$, respectively.

For each initial state $i\in S$, $\pi\in\Pi$, and any fixed initial distribution $\gamma$ on $S$,   we define the finite-horizon expected total costs  from time 0 to the fixed terminal time $T>0$ as follows:
$$V_n(i,\pi):=E_i^{\pi}\left[\int_0^T\int_Ac_n(\xi_{t-},a)\pi(da|\omega,t)dt\right],$$
$$V_n(\pi):=E_{\gamma}^{\pi}\left[\int_0^T\int_Ac_n(\xi_{t-},a)\pi(da|\omega,t)dt\right]$$
for all $n=0,1,\ldots, N$,  provided that  the integrals are well defined.

 A policy $\pi^*\in \Pi$ is said to be finite-horizon optimal if $V_0(i,\pi^*)=\inf\limits_{\pi\in\Pi}V_0(i,\pi)$ for all $i\in S$.

Now we state the constrained optimization problem considered in this paper below:
\begin{eqnarray}\label{P}
  {\text  {\rm Minimize}} \ V_0(\pi) \ {\rm over} \ U:=\{\pi\in\Pi|\ V_n(\pi)\leq d_n, \ 1\leq n\leq N\}.
\end{eqnarray}
\begin{defn}
{\rm   A policy $\pi^*\in U$ is said to be constrained-optimal if $V_0(\pi^*)=\inf\limits_{\pi\in U}V_0(\pi)$.
}
\end{defn}

\section{Preliminaries}
In this section, we will give optimality conditions for the existence of optimal policies and some preliminary results  to prove our main results.

To avoid  the explosiveness of the process $\{\xi_t,t\geq0\}$, we need the following condition from \cite{pi11,guo09}.
\begin{ass}\label{ass3.1}
  There exist a weight  function $w\geq1$ on $S$, and constants $\rho_1>0$, $b_1\geq0$,  and $L>0$,  such that
\begin{itemize}
\item[{\rm (i)}] $\sum_{j\in S}w(j)q(j|i,a)\leq \rho_1 w(i)+b_1$ for all $(i,a)\in K$.
\item[{\rm (ii)}] $q^*(i)\leq Lw(i)$ for all $i\in S$.
\end{itemize}
\end{ass}
In order to guarantee the finiteness of finite-horizon expected total cost criteria, we also consider the conditions below, which are widely used in \cite{guo09,guo11,guo12,guo13,pi11,gs11}.
\begin{ass}\label{ass3.2}
\begin{itemize}
\item[{\rm (i)}] $\gamma(w):=\sum_{i\in S}w(i)\gamma(i)<\infty$, where $w$ comes from Assumption \ref{ass3.1}.
\item[{\rm (ii)}] There exists a constant $M>0$ such that $|c_n(i,a)|\leq Mw(i)$ for all $(i,a)\in K$ and $n=0,1,\ldots,N$.
\end{itemize}
\end{ass}
Under the above two assumptions, we have the following statements.
\begin{lem}\label{lem3.1}
Suppose that Assumptions \ref{ass3.1} and \ref{ass3.2} hold. Then for each $\pi\in\Pi$, we have
  \begin{itemize}
  \item[{\rm (a)}] $P_{\gamma}^{\pi}(T_{\infty}=\infty)=1$ and $P_{\gamma}^{\pi}(\xi_t\in S)=1$ for all $t\geq0$.
  \item[{\rm(b)}] $E_i^{\pi}[w(\xi_t)]\leq e^{\rho_1t}w(i)+\dfrac{b_1}{\rho_1}(e^{\rho_1 t}-1)$ for all $i\in S$ and $t\geq0$.
  \item[{\rm (c)}] $|V_n(i,\pi)|\leq MT\left[e^{\rho_1T}w(i)+\dfrac{b_1}{\rho_1}(e^{\rho_1T}-1)\right]$ for all $i\in S$ and $n=0,1,\ldots,N$.
  \item[{\rm(d)}] $|V_0(\pi)|\leq MT\left[e^{\rho_1T}\gamma(w)+\dfrac{b_1}{\rho_1}(e^{\rho_1T}-1)\right]$ for all $n=0,1,\ldots,N$.
  \end{itemize}
\end{lem}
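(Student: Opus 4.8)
The plan is to prove the moment bound (b) first by a Dynkin-type (drift) argument, then deduce non-explosiveness (a) from it together with Assumption~\ref{ass3.1}(ii), and finally obtain the cost bounds (c) and (d) by integrating (b) against the cost-rate bound of Assumption~\ref{ass3.2}(ii).

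For (b) the decisive tool is that $\nu^{\pi}$ is the dual predictable projection of $\mu$. Applying this to the telescoping identity $w(\xi_{t\wedge T_m})-w(i)=\int_0^{t\wedge T_m}\int_S[w(j)-w(\xi_{s-})]\mu(\omega,ds,j)$, and using $\sum_j q(j|i,a)=0$ to rewrite $\sum_{j\neq i}q(j|i,a)[w(j)-w(i)]=\sum_j q(j|i,a)w(j)$, I would obtain
$$E_i^{\pi}[w(\xi_{t\wedge T_m})]=w(i)+E_i^{\pi}\left[\int_0^{t\wedge T_m}\int_A\pi(da|\omega,s)\Big(\sum_j q(j|\xi_{s-},a)w(j)\Big)ds\right].$$
Stopping at $T_m$ keeps the integrand integrable, since before $T_m$ the path visits only finitely many states, so the compensated process is a genuine martingale rather than merely a local one. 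Bounding the inner integral by $\rho_1 w(\xi_{s-})+b_1$ via Assumption~\ref{ass3.1}(i) and writing $h_m(t):=E_i^{\pi}[w(\xi_{t\wedge T_m})]$ gives $h_m(t)\le w(i)+\int_0^t[\rho_1 h_m(s)+b_1]\,ds$, whence Gronwall's inequality yields the bound of (b) for the stopped process, uniformly in $m$.

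For (a), the uniform-in-$m$ bound alone is not enough (from $w\ge 1$ it yields only $P_i^{\pi}(T_m\le t)\le C(t)$, a useless constant), so I would bring in Assumption~\ref{ass3.1}(ii): the holding time in $X_m$ is governed by a rate at most $q^*(X_m)\le Lw(X_m)$, so on $\{T_{\infty}\le t\}$ the identity $\sum_m\Theta_{m+1}=T_{\infty}<\infty$ forces $w(X_m)\to\infty$ along the path. This contradicts the uniform bound $\sup_m E_i^{\pi}[e^{-\rho_1(t\wedge T_m)}w(\xi_{t\wedge T_m})]\le w(i)+\tfrac{b_1}{\rho_1}$, read off the supermartingale $e^{-\rho_1 t}w(\xi_t)$, via Fatou's lemma. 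Hence $P_{\gamma}^{\pi}(T_{\infty}\le t)=0$ for every $t$, giving $P_{\gamma}^{\pi}(T_{\infty}=\infty)=1$ and then $P_{\gamma}^{\pi}(\xi_t\in S)=1$. With non-explosiveness secured, letting $m\to\infty$ in $h_m$ and applying Fatou upgrades (b) to the unstopped process.

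Finally, (c) and (d) are routine consequences of (b). Using Assumption~\ref{ass3.2}(ii) and Tonelli's theorem,
$$|V_n(i,\pi)|\le M\int_0^T E_i^{\pi}[w(\xi_{t-})]\,dt=M\int_0^T E_i^{\pi}[w(\xi_{t})]\,dt,$$
since $\xi_{t-}=\xi_t$ almost surely at each fixed $t$; bounding $E_i^{\pi}[w(\xi_t)]$ by its value at $t=T$ through (b) and integrating over $[0,T]$ produces the factor $MT$ and the stated estimate, and (d) follows identically after averaging over $\gamma$ and invoking Assumption~\ref{ass3.2}(i) so that $E_{\gamma}^{\pi}[w(\xi_t)]\le e^{\rho_1 t}\gamma(w)+\tfrac{b_1}{\rho_1}(e^{\rho_1 t}-1)$. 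I expect the genuine obstacle to be part (a): rigorously justifying the martingale (not merely local martingale) character of the compensated process before the uniform moment bound is available, and closing the non-explosion argument by using \emph{both} parts of Assumption~\ref{ass3.1} simultaneously rather than the drift condition alone.
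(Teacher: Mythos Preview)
Your treatment of (c) and (d) is exactly what the paper does. For (a) and (b) the paper gives no argument of its own: it simply invokes Proposition~2.1 of Piunovskiy and Zhang~\cite{pi11}. Your sketch is a faithful outline of the standard drift/Lyapunov proof that underlies that reference --- stop at $T_m$, compensate via the dual predictable projection, apply Assumption~\ref{ass3.1}(i) and Gronwall for the moment bound, then combine the resulting uniform-in-$m$ estimate with Assumption~\ref{ass3.1}(ii) to rule out explosion --- so you are effectively filling in what the paper outsources rather than taking a different route.

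The one step worth tightening is your claim that $w(X_m)\to\infty$ on $\{T_\infty<\infty\}$. It is true, but under a general history-dependent policy the holding times are only stochastically bounded below by exponentials with rate $Lw(X_m)$ (conditionally on $\mathcal{F}_{T_m}$), so the passage from $\sum_m\Theta_{m+1}<\infty$ to $w(X_m)\to\infty$ requires a conditional Borel--Cantelli or coupling argument rather than a purely pathwise one. Once that is in place, Fatou against the supermartingale $e^{-\rho_1 t}\bigl(w(\xi_t)+b_1/\rho_1\bigr)$ closes the non-explosion proof as you indicate. You have already flagged (a) as the delicate part, and that is indeed where all the work sits.
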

\begin{proof}
 Parts (a) and (b) follow from Proposition 2.1 in \cite{pi11}.

(c) By Assumption \ref{ass3.2}(ii), we obtain
\begin{eqnarray*}
|V_n(i,\pi)|&\leq& E_i^{\pi}\left[\int_0^T\int_A|c_n(\xi_{t-},a)|\pi(da|\omega,t)dt\right]\\
&\leq& ME_i^{\pi}\left[\int_0^Tw(\xi_t)dt\right]\leq MT\left[e^{\rho_1T}w(i)+\dfrac{b_1}{\rho_1}(e^{\rho_1T}-1)\right]
\end{eqnarray*}
for all $i\in S$, $\pi\in\Pi$, and $n=0,1,\ldots,N$, where the last inequality follows from part (b).

(d) Part (d) follows immediately from part (c).
\end{proof}
In addition to Assumptions \ref{ass3.1} and \ref{ass3.2}, we also need the following conditions to ensure the existence of optimal policies.
\begin{ass}\label{ass3.3}
\begin{itemize}
  \item[{\rm (i)}] There exist  constants $\rho_2>0$, $\rho_3>0$,  $b_2\geq0$, and  $b_3\geq0$  such that
  \begin{eqnarray*}
     \sum_{i\in S}w^2(j)q(j|i,a)\leq \rho_2w^2(i)+b_2,  \ \
   and \ \ \sum_{j\in S}w^3(j)q(j|i,a)\leq \rho_3w^3(i)+b_3
  \end{eqnarray*}
for all $(i,a)\in K$, where $w$ comes from Assumption \ref{ass3.1}.

\item[{\rm (ii)}]  For each $i\in S$, the set $A(i)$ is compact.

\item[{\rm (iii)}] For each fixed $i,j\in S$ and $n=0,1,\ldots,N$,  the functions $c_n(i,a)$, $q(j|i,a)$ and $\sum_{k\in S}w(k)q(k|i,a)$ are  continuous in $a\in A(i)$.
\end{itemize}
\end{ass}
Finally, we give the following assertions which are used to prove our main results.

\begin{lem}\label{lem3.2}
Fix any $i\in S$, $\pi\in\Pi$, and $s\in \mathbb{R}_+$. Let $\mathcal{F}_{T_n}:=\sigma(X_m,T_m, 0\leq m\leq n)$.  For any integrable random variable $Z$ on $(\Omega,\mathcal{F},P_i^{\pi}),$ we have
$$E_i^{\pi}[Z|\ \mathcal{F}_{T_n}, s<T_{n+1}]I_{\{s\in [T_n,T_{n+1})\}}=\frac{E_i^{\pi}[ZI_{\{s<T_{n+1}\}}| \ \mathcal{F}_{T_n}]}{E_i^{\pi}[I_{\{s<T_{n+1}\}}| \ \mathcal{F}_{T_n}]}I_{\{s\in [T_n,T_{n+1})\}},$$
where   we make the convention that $\dfrac{0}{0}=0.$
\end{lem}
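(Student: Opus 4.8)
The plan is to reduce the identity to the standard description of a conditional expectation taken with respect to a $\sigma$-algebra that has been enlarged by a single event. Write $B:=\{s<T_{n+1}\}$ and let $\mathcal{H}:=\sigma(\mathcal{F}_{T_n}\cup\{B\})$ be the smallest $\sigma$-algebra containing $\mathcal{F}_{T_n}$ together with the event $B$; by the very meaning of the notation, $E_i^{\pi}[Z|\ \mathcal{F}_{T_n}, s<T_{n+1}]$ is $E_i^{\pi}[Z|\ \mathcal{H}]$ read on the event $B$. First I would record the structure of $\mathcal{H}$: every $H\in\mathcal{H}$ can be written as $(G_1\cap B)\cup(G_2\cap B^c)$ with $G_1,G_2\in\mathcal{F}_{T_n}$, so that the trace of $\mathcal{H}$ on $B$ is generated by the $\pi$-system $\{G\cap B:\ G\in\mathcal{F}_{T_n}\}$. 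Consequently it suffices to identify $E_i^{\pi}[Z|\ \mathcal{H}]$ on $B$ by testing against the sets $G\cap B$, $G\in\mathcal{F}_{T_n}$.

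Next I would introduce the candidate
\[
\psi:=\frac{E_i^{\pi}[Z I_B|\ \mathcal{F}_{T_n}]}{E_i^{\pi}[I_B|\ \mathcal{F}_{T_n}]}
\]
(with the convention $0/0=0$), which is $\mathcal{F}_{T_n}$-measurable, hence $\mathcal{H}$-measurable, and integrable since $Z$ is. The core computation is to verify that $E_i^{\pi}[\psi I_{G\cap B}]=E_i^{\pi}[Z I_{G\cap B}]$ for every $G\in\mathcal{F}_{T_n}$. I would do this by pulling the $\mathcal{F}_{T_n}$-measurable factor $\psi I_G$ out of a conditional expectation given $\mathcal{F}_{T_n}$, so that $E_i^{\pi}[\psi I_G I_B]=E_i^{\pi}\big[\psi I_G\,E_i^{\pi}[I_B|\ \mathcal{F}_{T_n}]\big]$, and then use $\psi\,E_i^{\pi}[I_B|\ \mathcal{F}_{T_n}]=E_i^{\pi}[Z I_B|\ \mathcal{F}_{T_n}]$ to reach $E_i^{\pi}\big[I_G\,E_i^{\pi}[Z I_B|\ \mathcal{F}_{T_n}]\big]=E_i^{\pi}[Z I_{G\cap B}]$. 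Since the generating class is a $\pi$-system, this identifies $E_i^{\pi}[Z|\ \mathcal{H}]\,I_B=\psi\,I_B$ almost surely.

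Finally, because $\{T_n\leq s\}\in\mathcal{F}_{T_n}$ and $I_{\{s\in[T_n,T_{n+1})\}}=I_{\{T_n\leq s\}}I_B$, I would multiply the identity $E_i^{\pi}[Z|\ \mathcal{H}]\,I_B=\psi\,I_B$ by the $\mathcal{F}_{T_n}$-measurable indicator $I_{\{T_n\leq s\}}$ to obtain the claimed equality. The one delicate point is the set $\{E_i^{\pi}[I_B|\ \mathcal{F}_{T_n}]=0\}$: there $I_B=0$ almost surely, so it contributes nothing to either side and the convention $0/0=0$ keeps $\psi$ well-defined; I expect this bookkeeping, together with the justification that testing against $\{G\cap B\}$ suffices to pin down the conditional expectation on $B$, to be the main (though routine) obstacle, while the algebraic cancellation itself is immediate from the tower property.
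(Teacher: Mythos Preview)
Your proposal is correct and follows essentially the same route as the paper: both define the candidate ratio, verify the defining property of conditional expectation by integrating against sets of the form $G\cap\{s<T_{n+1}\}$ with $G\in\mathcal{F}_{T_n}$ via the tower property, and then pass from this generating $\pi$-system to all of $\sigma(\mathcal{F}_{T_n},\{s<T_{n+1}\})$. The only cosmetic difference is that you invoke the explicit decomposition $H=(G_1\cap B)\cup(G_2\cap B^c)$ of sets in $\mathcal{H}$, whereas the paper applies the monotone class theorem; and you multiply by $I_{\{T_n\le s\}}$ at the end, whereas the paper carries the full indicator $I_{\{s\in[T_n,T_{n+1})\}}$ throughout.
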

\begin{proof}
Since $\{s\in [T_n,T_{n+1})\}\in \sigma(\mathcal{F}_{T_n},\{s\in [T_n,T_{n+1})\}),$ we have
$$E_i^{\pi}[Z|\ \mathcal{F}_{T_n}, s<T_{n+1}]I_{\{s\in [T_n,T_{n+1})\}}=E_i^{\pi}[ZI_{\{s\in [T_n,T_{n+1})\}}|\ \mathcal{F}_{T_n}, s<T_{n+1}].$$
For any  $B\in \mathcal{F}_{T_n}$, straightforward calculations yield
\begin{eqnarray*}
  &&\int_{B\cap\{s<T_{n+1}\}}\frac{E_i^{\pi}[ZI_{\{s<T_{n+1}\}}| \ \mathcal{F}_{T_n}]}{E_i^{\pi}[I_{\{s<T_{n+1}\}}|\ \mathcal{F}_{T_n}]}I_{\{s\in [T_n,T_{n+1})\}}dP_i^{\pi}\\
&=&E_i^{\pi}\left[I_{\{s<T_{n+1}\}}\frac{E_i^{\pi}[ZI_{B\cap\{s<T_{n+1}\}\cap{\{T_n\leq s\}}}| \ \mathcal{F}_{T_n}]}{E_i^{\pi}[I_{\{s<T_{n+1}\}}|\ \mathcal{F}_{T_n}]}\right]\\
&=&E_i^{\pi}\left[E_i^{\pi}\left[I_{\{s<T_{n+1}\}}\frac{E_i^{\pi}[ZI_{B\cap\{s<T_{n+1}\}\cap{\{T_n\leq s\}}}|\ \mathcal{F}_{T_n}]}{E_i^{\pi}[I_{\{s<T_{n+1}\}}|\ \mathcal{F}_{T_n}]}\bigg|\ \mathcal{F}_{T_n}\right]\right]\\
&=&E_i^{\pi}\left[E_i^{\pi}[I_{\{s<T_{n+1}\}}| \ \mathcal{F}_{T_n}]\frac{E_i^{\pi}[ZI_{B\cap\{s<T_{n+1}\}\cap{\{T_n\leq s\}}}| \ \mathcal{F}_{T_n}]}{E_i^{\pi}[I_{\{s<T_{n+1}\}}|\ \mathcal{F}_{T_n}]}\right]\\
&=&E_i^{\pi}[ZI_{B\cap\{s<T_{n+1}\}\cap{\{T_n\leq s\}}}]\\
&=&\int_{B\cap\{s<T_{n+1}\}}ZI_{\{s\in [T_n,T_{n+1})\}}dP_i^{\pi},
\end{eqnarray*}
where the first equality is due to the fact that  $B\in \mathcal{F}_{T_n} $ and  $\{T_n\leq s\}\in\mathcal{F}_{T_n}$, and  $$\int_{B\cap\{T_{n+1}\leq s\}}\frac{E_i^{\pi}[ZI_{\{s<T_{n+1}\}}|\ \mathcal{F}_{T_n}]}{E_i^{\pi}[I_{\{s<T_{n+1}\}}|\ \mathcal{F}_{T_n}]}I_{\{s\in [T_n,T_{n+1})\}}dP_i^{\pi}=\int_{B\cap\{T_{n+1}\leq s\}}ZI_{\{s\in [T_n,T_{n+1})\}}dP_i^{\pi}=0.$$
By the monotone class theorem, we have that for any $C\in \sigma(\mathcal{F}_{T_n},\{s\in [T_n,T_{n+1})\}),$
$$\int_{C}\frac{E_i^{\pi}[ZI_{\{s<T_{n+1}\}}| \ \mathcal{F}_{T_n}]}{E_i^{\pi}[I_{\{s<T_{n+1}\}}|\ \mathcal{F}_{T_n}]}I_{\{s\in [T_n,T_{n+1})\}}dP_i^{\pi}=\int_{C}ZI_{\{s\in [T_n,T_{n+1})\}}dP_i^{\pi}.$$
Hence, the  assertion  follows from the definition of conditional expectation.
\end{proof}
Employing  Lemma \ref{lem3.2}, we have the following result.
\begin{thm}\label{thm4.1}
Suppose that  Assumption \ref{ass3.1}  holds. Then  for  any  $i\in S$,  $\pi\in\Pi$, and $s<t$, the following statements hold.
\begin{itemize}
\item[{\rm(a)}] $E_i^{\pi}[w(\xi_t)I_{\{t<T_{m+1}\}}|\ \mathcal{F}_s]\leq (e^{\rho_1(t-s)}+1)w(\xi_s)+\dfrac{b_1}{\rho_1}(e^{\rho_1(t-s)}-1)$ for all $m=0,1,\ldots$.
\item[{\rm (b)}] $E_i^{\pi}[w(\xi_t)|\ \xi_s]\leq (e^{\rho_1(t-s)}+1)w(\xi_s)+\dfrac{b_1}{\rho_1}(e^{\rho_1(t-s)}-1).$
\end{itemize}
\end{thm}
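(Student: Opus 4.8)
The plan is to exploit Lemma \ref{lem3.2} to replace conditioning on $\mathcal{F}_s$ at the \emph{fixed} time $s$ by conditioning at the last jump epoch before $s$, and then to run a jump-by-jump induction in which the drift estimate of Assumption \ref{ass3.1}(i) is applied once per jump. Fix $m$ and observe first that only the events $\{s\in[T_n,T_{n+1})\}$ with $n\le m$ matter: if $n\ge m+1$ then $T_{m+1}\le T_n\le s<t$, so $I_{\{t<T_{m+1}\}}=0$ there. On $\{s\in[T_n,T_{n+1})\}$, Lemma \ref{lem3.2} (with $Z=w(\xi_t)I_{\{t<T_{m+1}\}}$, which is integrable because the indicator forces at most $m$ jumps, so only Assumption \ref{ass3.1} is needed) rewrites $E_i^\pi[w(\xi_t)I_{\{t<T_{m+1}\}}\mid\mathcal{F}_s]$ as a ratio of two $\mathcal{F}_{T_n}$-conditional expectations, normalized by $D:=E_i^\pi[I_{\{s<T_{n+1}\}}\mid\mathcal{F}_{T_n}]$. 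I would therefore reduce part (a) to an estimate conditioned at jump epochs, pay a one-time additive cost in passing back to the fixed time $s$, and then derive (b) from (a) cheaply.

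The inductive core is a jump-epoch estimate: for each $r\le m$, on $\{T_r\le t\}$,
$$E_i^\pi[w(\xi_t)I_{\{t<T_{m+1}\}}\mid\mathcal{F}_{T_r}]\le e^{\rho_1(t-T_r)}w(X_r)+\tfrac{b_1}{\rho_1}\big(e^{\rho_1(t-T_r)}-1\big).$$
I would prove this by downward induction on $r$ from $r=m$ to $r=0$ (equivalently, induction on the number $m-r$ of admissible future jumps). The base case $r=m$ is immediate: on $\{t<T_{m+1}\}$ one has $\xi_t=X_m$, so the left side equals $w(X_m)P_i^\pi(T_{m+1}>t\mid\mathcal{F}_{T_m})\le w(X_m)\le e^{\rho_1(t-T_m)}w(X_m)$. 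For the step I would decompose according to whether the next jump falls after $t$ or in $(T_r,t]$: on $\{t<T_{r+1}\}$ the state is frozen at $X_r$, while on $\{T_{r+1}\le t\}$ the tower property and the inductive hypothesis at $T_{r+1}$ supply the bound for the post-jump evolution. The decisive computation is the integral over the first jump: since a predictable process is frozen between consecutive jumps, the conditional law of $(T_{r+1},X_{r+1})$ given $\mathcal{F}_{T_r}$ has survival function $\sigma(u)=\exp(-\int_{T_r}^u\Lambda(v)\,dv)$ and jump rate to $j$ equal to $\int_A\pi(da\mid\omega,v)q(j\mid X_r,a)$, where $\Lambda$ is the total exit rate read off from \eqref{nu}. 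Bounding $\sum_{j\neq X_r}w(j)q(j\mid X_r,a)\le\rho_1 w(X_r)+b_1-w(X_r)q(X_r\mid X_r,a)$ by Assumption \ref{ass3.1}(i) and integrating by parts against $\sigma$ — whereby the diagonal term $-w(X_r)q(X_r\mid X_r,a)$ combines with $\Lambda\sigma=-\sigma'$ and cancels the explicit drift integral — yields precisely the clean bound above.

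With the jump-epoch estimate in hand, I would pass to the fixed time $s$ on $\{s\in[T_n,T_{n+1})\}$ by splitting $I_{\{t<T_{m+1}\}}=I_{\{t<T_{n+1}\}}+I_{\{T_{n+1}\le t<T_{m+1}\}}$. The first piece contributes $w(\xi_s)E_i^\pi[I_{\{t<T_{n+1}\}}\mid\mathcal{F}_s]\le w(\xi_s)$, because $\xi_t=\xi_s$ when no jump occurs in $(s,t]$; this is exactly the source of the extra summand $w(\xi_s)$, i.e.\ the ``$+1$'' in the coefficient. The second piece is handled by the jump-epoch estimate at $T_{n+1}$ together with the conditional first-jump law after $s$ (the normalization by $D$ in Lemma \ref{lem3.2} converts $\int_{T_n}^{u}\Lambda$ into $\int_{s}^{u}\Lambda$, i.e.\ the conditional survival after $s$), and the same drift-plus-integration-by-parts computation bounds it by $e^{\rho_1(t-s)}w(\xi_s)+\tfrac{b_1}{\rho_1}(e^{\rho_1(t-s)}-1)$. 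Summing the two pieces gives (a). Finally, (b) follows by conditioning (a) further onto $\sigma(\xi_s)\subseteq\mathcal{F}_s$ — the right-hand side of (a) is already $\sigma(\xi_s)$-measurable — and letting $m\to\infty$, where $w(\xi_t)I_{\{t<T_{m+1}\}}\uparrow w(\xi_t)I_{\{t<T_\infty\}}$ and monotone convergence preserves the (uniform in $m$) bound.

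I expect the main obstacle to be the rigorous derivation and bookkeeping of the conditional first-jump distribution given $\mathcal{F}_{T_n}$ (and, after the normalization in Lemma \ref{lem3.2}, given $\mathcal{F}_s$) for a general randomized history-dependent policy: one must justify that the $\mathcal{P}$-measurable kernel $\pi$ is frozen between successive jumps so that the survival and jump-rate formulas hold, which is where the dual predictable projection characterization of $\nu^\pi$ in \eqref{nu}--\eqref{mu} is essential. The remaining steps — the per-jump drift estimate and the integration-by-parts cancellation — are routine once the conditional law is pinned down.
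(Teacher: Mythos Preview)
Your proposal is correct and follows essentially the same architecture as the paper's proof: decompose over $\{s\in[T_n,T_{n+1})\}$, invoke Lemma~\ref{lem3.2} to pass to $\mathcal{F}_{T_n}$-conditioning, split $I_{\{t<T_{m+1}\}}$ into the frozen piece $I_{\{t<T_{n+1}\}}$ (producing the extra $w(\xi_s)$) and the jumped piece (bounded via the jump-epoch estimate and the conditional first-jump law after $s$), and then pass to the limit for (b). The only cosmetic difference is that the paper outsources the jump-epoch estimate and the integration bound to equality~(36) and Lemma~A.1 of \cite{guo11}, respectively, whereas you propose to derive both directly by downward induction and integration by parts---the content is the same.
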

\begin{proof}
(a) Because $s<t$, and  the sets $\{T_n\leq s\}$, $\{ T_\infty\leq s\}$ are in $\mathcal{F}_s$, we have
\begin{eqnarray}\label{3-0}
&&E_i^{\pi}[w(\xi_t)I_{\{t<T_{m+1}\}}|\ \mathcal{F}_s]\nonumber\\
&=&\sum_{n=0}^{\infty}E_i^{\pi}[w(\xi_t)I_{\{t<T_{m+1}\}}|\ \mathcal{F}_s]I_{\{s\in [T_n,T_{n+1})\}}+E_i^{\pi}[w(\xi_t)I_{\{t<T_{m+1}\}}|\ \mathcal{F}_s]I_{\{ T_\infty\leq s\}}\nonumber\\
&=&\sum_{n=0}^{m}E_i^{\pi}[w(\xi_t)I_{\{t<T_{m+1}\}}|\ \mathcal{F}_s]I_{\{s\in [T_n,T_{n+1})\}}.
\end{eqnarray}
Below we fix $n\in \{0,1,\ldots,m\}$, and set $\mathcal{F}_{T_n}:=\sigma(X_l,T_l, 0\leq l\leq n)$.
Since $\{s\in [T_n,T_{n+1})\}\in \mathcal{F}_s \cap \sigma(\mathcal{F}_{T_n},\{s\in [T_n,T_{n+1})\})$ and
$\{s\in [T_n,T_{n+1})\}\cap \mathcal{F}_s=\{s\in [T_n,T_{n+1})\}\cap \sigma(\mathcal{F}_{T_n},\{s\in [T_n,T_{n+1})\}),$ by Lemma 6.2 in \cite{Kall},  we obtain
\begin{align}\label{3-1}
E_i^{\pi}[w(\xi_t)I_{\{t<T_{m+1}\}}|\ \mathcal{F}_s]I_{\{s\in [T_n,T_{n+1})\}}
=&E_i^{\pi}[w(\xi_t)I_{\{t<T_{m+1}\}}|\ \mathcal{F}_{T_n},s\in [T_n,T_{n+1})]I_{\{s\in [T_n,T_{n+1})\}}\nonumber\\
=&E_i^{\pi}[w(\xi_t)I_{\{t<T_{m+1}\}}|\ \mathcal{F}_{T_n},s<T_{n+1}]I_{\{s\in [T_n,T_{n+1})\}}\nonumber\\
=&\frac{E_i^{\pi}[w(\xi_t)I_{\{t<T_{m+1}\}\cap\{ s<T_{n+1}\}}|\ \mathcal{F}_{T_n}]}{E_i^{\pi}[I_{\{s<T_{n+1}\}}]|\ \mathcal{F}_{T_n}]}I_{\{s\in [T_n,T_{n+1})\}},
\end{align}
where the second equality holds because $\{T_n\leq s\}\in \mathcal{F}_{T_n},$ and the third one follows from Lemma \ref{lem3.2}.
   Moreover,  direct computations yield
\begin{align*}
&E_i^{\pi}[w(\xi_t)I_{\{t<T_{m+1}\}\cap\{ s<T_{n+1}\}}|\ \mathcal{F}_{T_n}]I_{\{s\in [T_n,T_{n+1})\}}\\
=&E_i^{\pi}[E_i^{\pi}[w(\xi_t)I_{\{t<T_{m+1}\}\cap\{ s<T_{n+1}\}}|\ \mathcal{F}_{T_{n+1}}]|\ \mathcal{F}_{T_n}]I_{\{s\in [T_n,T_{n+1})\}}\\
=&E_i^{\pi}[E_i^{\pi}[w(\xi_t)I_{\{t<T_{m+1}\}}|\ \mathcal{F}_{T_{n+1}}]I_{\{T_n\leq s<T_{n+1}\}}|\ \mathcal{F}_{T_n}]I_{\{s\in [T_n,T_{n+1})\}}\\
\leq&E_i^{\pi}\left[\left\{I_{\{T_{n+1}\leq t\}}h(T_{n+1},X_{n+1}, t)
+\sum_{l=1}^{n+1}I_{\{T_{l-1}\leq t<T_l\}}w(X_{l-1})\right\}I_{\{ T_n\leq s<T_{n+1}\}}\bigg|\ \mathcal{F}_{T_n}\right]I_{\{s\in [T_n,T_{n+1})\}}\\
=&E_i^{\pi}\left[ I_{\{T_{n+1}\leq t\}}I_{\{ T_n\leq s<T_{n+1}\}}h(T_{n+1},X_{n+1}, t)+I_{\{T_{n}\leq t<T_{n+1}\}}I_{\{ T_n\leq s<T_{n+1}\}}w(X_{n})  |\ \mathcal{F}_{T_n}\right]I_{\{s\in [T_n,T_{n+1})\}}\\
=&E_i^{\pi}\left[ I_{\{s<T_{n+1}\leq t\}}h(T_{n+1},X_{n+1},t)+I_{\{t<T_{n+1}\}}w(X_{n})|\ \mathcal{F}_{T_n}\right]I_{\{s\in [T_n,T_{n+1})\}},
\end{align*}
where $h(\underline{t},i, \overline{t}):=e^{\rho_1(\overline{t}-\underline{t})}w(i)+\dfrac{b_1}{\rho_1}(e^{\rho_1(\overline{t}-\underline{t})}-1)$ for all $i\in S$, $0\leq\underline{t}\leq\overline{t}$, and the inequality follows from equality (36) in \cite{guo11}.
Note that $I_{\{t<T_{n+1}\}}\leq I_{\{s<T_{n+1}\}}$. On one hand, we get
\begin{align*}
  \frac{E_i^{\pi}\left[I_{\{t<T_{n+1}\}}w(X_{n})|\ \mathcal{F}_{T_n}\right]I_{\{s\in [T_n,T_{n+1})\}}}{E_i^{\pi}[I_{\{s<T_{n+1}\}}|\ \mathcal{F}_{T_n}]}
=&\frac{E_i^{\pi}\left[I_{\{t<T_{n+1}\}}|\ \mathcal{F}_{T_n}\right]I_{\{s\in [T_n,T_{n+1})\}}}{E_i^{\pi}[I_{\{s<T_{n+1}\}}|\ \mathcal{F}_{T_n}]}w(X_{n})\\
\leq & w(X_{n})I_{\{s\in [T_n,T_{n+1})\}} .
\end{align*}
On the other hand, it follows from  Lemma 3.3 in \cite{jacod} that  the function
$$\Lambda^{\pi}(j|\omega, t):=\int_A\pi(da|\omega,t)q(j|\xi_{t-}(\omega),a)I_{\{j\neq \xi_{t-}(\omega)\}}$$
has the representation below:
$$\Lambda^{\pi}(j|\omega,t)=I_{\{t=0\}}\Lambda^0(j|i_0)+\sum_{l\geq0}I_{\{T_l(\omega
)< t\leq T_{l+1}(\omega
)\}}\Lambda^l(j|i_0,\theta_1, i_1,\ldots, \theta_l, i_l, t-T_l(\omega
)),$$
where $\Lambda^l(j|i_0,\theta_1, i_1,\ldots, \theta_l, i_l, \widetilde{t})$ are some nonnegative nonrandom measurable functions. Employing the construction of the measure $P_{i}^{\pi}$ (see \cite{guo11} for details), we have
\begin{align*}&\frac{E_i^{\pi}\left[I_{\{s<T_{n+1}\leq t\}}h(T_{n+1},X_{n+1},t)|\ \mathcal{F}_{T_n}\right]I_{\{s\in [T_n,T_{n+1})\}}}{E_i^{\pi}[I_{\{s<T_{n+1}\}}|\ \mathcal{F}_{T_n}]}\\
=&\int_{s-T_n}^{t-T_n}\bigg\{e^{-\int_{0}^{u}\sum\limits_{j\neq X_n}\Lambda^{n}(j|X_0,\Theta_1, X_1,\ldots, \Theta_n, X_n,v)dv}\\&\times\sum\limits_{k\neq X_n}h(T_n+u,k,t)\Lambda^{n}(k|X_0,\Theta_1, X_1,\ldots, \Theta_n, X_n,u)\bigg\}du\\ &\times\left(e^{-\int_{0}^{s-T_n}\sum\limits_{j\neq X_n}\Lambda^{n}(j|X_0,\Theta_1, X_1,\ldots, \Theta_n, X_n,v)dv}\right)^{-1}I_{\{s\in [T_n,T_{n+1})\}}\\
=& \int_{s-T_n}^{t-T_n}\bigg\{e^{-\int_{s-T_n}^{u}\sum\limits_{j\neq X_n}\Lambda^{n}(j|X_0,\Theta_1, X_1,\ldots, \Theta_n, X_n,v)dv}\\&\times\sum\limits_{k\neq X_n}h(T_n+u,k,t)\Lambda^{n}(k|X_0,\Theta_1, X_1,\ldots, \Theta_n, X_n, u)\bigg\}duI_{\{s\in [T_n,T_{n+1})\}} \\
\leq & h(s-T_n,X_n,t-T_n)I_{\{s\in [T_n,T_{n+1})\}}= h(0,X_n,t-s)I_{\{s\in [T_n,T_{n+1})\}},
\end{align*}

where the inequality follows from Lemma A.1 in \cite{guo11}.
Hence, we obtain
\begin{eqnarray}\label{3-2}
\frac{E_i^{\pi}[w(\xi_t)I_{\{t<T_{m+1}\}\cap\{ s<T_{n+1}\}}|\ \mathcal{F}_{T_n}]}{E_i^{\pi}[I_{\{s<T_{n+1}\}}]|\ \mathcal{F}_{T_n}]}I_{\{s\in [T_n,T_{n+1})\}}
\leq [h(0,X_n,t-s)+w(X_{n})]I_{\{s\in [T_n,T_{n+1})\}}.
\end{eqnarray}
Therefore, by (\ref{3-0})-(\ref{3-2}),  we have
\begin{eqnarray*}
  E_i^{\pi}[w(\xi_t)I_{\{t<T_{m+1}\}}|\ \mathcal{F}_s]&\leq&
\sum_{n=0}^{m}[h(0,X_n,t-s)+\omega(X_{n})]I_{\{s\in [T_n,T_{n+1})\}}\\
&=&\sum_{n=0}^{m}[h(0,\xi_s,t-s)+\omega(\xi_s)]I_{\{s\in [T_n,T_{n+1})\}}\\
&\leq&h(0,\xi_s,t-s)+\omega(\xi_s).
\end{eqnarray*}

(b) By part (a), we have
\begin{eqnarray}\label{3-4}
  E_i^{\pi}[w(\xi_t)I_{\{t<T_{m+1}\}}|\ \xi_s]\leq (e^{\rho_1(t-s)}+1)w(\xi_s)+\frac{b_1}{\rho_1}(e^{\rho_1(t-s)}-1)
\end{eqnarray}
for all $m=0,1,2,\ldots$. Moreover, it follows from  Proposition 2.1 in \cite{pi11} that
$$P_i^{\pi}\left(\lim\limits_{m\to\infty}w(\xi_t)I_{\{t<T_{m+1}\}}=w(\xi_t)\right)=1.$$
Hence, the  assertion follows from (\ref{3-4}), Lemma \ref{lem3.1}(a)  and the dominated convergence theorem of conditional expectation.
This completes the proof of the theorem.
\end{proof}
\begin{rem}
{\rm Theorem \ref{thm4.1} presents
  a \emph{new} estimation on the so-called weight function $w$ in the form of conditional expectation induced by  the randomized history-dependent policies, which generalizes the estimation in  the case of conditional expectation induced by the Markov policies in \cite{guo09} and the case of expectation induced by the randomized history-dependent policies in \cite{pi11,guo11,gs11,guo13,guo12}.
 Moreover, since the   state process does not have the Markov property under any randomized history-dependent policy,  the technique in  \cite{guo09} is inapplicable here.
  }
\end{rem}
The following assertion extends the analogue of the forward Kolmogorov equation in \cite{guo11,gs11,guo12,guo13,pi11} to that in the form of conditional expectation.
\begin{thm}\label{thm4.2}
Suppose that Assumption \ref{ass3.1} holds. Then for any $i\in S$, $B\subseteq S$,  $\pi\in\Pi$, and $s<t$, we have
\begin{eqnarray*}
P_{i}^{\pi}(\xi_t\in B|\ \mathcal{F}_s)&=&I_{\{\xi_s\in B\}}+E_{i}^{\pi}\left[\int_{s}^{t}\int_{A}q(B|\xi_{u},a)\pi(da|\omega,u)du\bigg|\ \mathcal{F}_s\right].
\end{eqnarray*}
\end{thm}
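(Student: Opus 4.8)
The plan is to reduce the statement to a \emph{conditional} form of the dual predictable projection identity linking the jump measure $\mu$ of (\ref{mu}) with its compensator $\nu^\pi$ of (\ref{nu}), after first rewriting the increment $I_{\{\xi_t\in B\}}-I_{\{\xi_s\in B\}}$ as a stochastic integral against $\mu$.

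First I would record the pathwise decomposition. On the event $\{T_\infty>t\}$, which carries full $P_i^\pi$-probability by Lemma \ref{lem3.1}(a), there are only finitely many jumps in $(s,t]$, so telescoping over the embedded chain gives
\[
I_{\{\xi_t\in B\}}-I_{\{\xi_s\in B\}}=\int_{(s,t]}\sum_{j\in S}\big(I_{\{j\in B\}}-I_{\{\xi_{u-}\in B\}}\big)\,\mu(\omega,du,j),
\]
where I use that at $u=T_m$ one has $\xi_{u-}=X_{m-1}$ while $\mu$ selects $j=X_m$. Since $I_{\{\xi_s\in B\}}$ is $\mathcal{F}_s$-measurable, after applying $E_i^\pi[\,\cdot\,|\,\mathcal{F}_s]$ it remains only to evaluate the conditional expectation of the integral on the right.

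The central step is the conditional dual predictable projection: for every nonnegative $\mathcal{P}$-measurable integrand $H(\omega,u,j)$ supported on $\{u>s\}$,
\[
E_i^\pi\Big[\int_{(s,\infty)}\sum_{j}H\,d\mu\,\Big|\,\mathcal{F}_s\Big]=E_i^\pi\Big[\int_{(s,\infty)}\sum_{j}H\,d\nu^\pi\,\Big|\,\mathcal{F}_s\Big].
\]
To obtain this from the unconditional defining property of $\nu^\pi$, I would test against an arbitrary $B'\in\mathcal{F}_s$. The key observation is that $I_{B'}(\omega)I_{\{u>s\}}$ is itself $\mathcal{P}$-measurable, because $B'\times(s,\infty)=\bigcup_n B'\times(s+\tfrac1n,\infty)$ and $B'\in\mathcal{F}_s\subseteq\mathcal{F}_{(s+1/n)-}$, so every generator lies in $\mathcal{P}$. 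Applying the projection identity to $I_{B'}I_{\{u>s\}}H$ and varying $B'$ yields the displayed conditional identity; the signed case needed below follows by splitting $H=H^+-H^-$, the requisite integrability coming from Assumption \ref{ass3.1}(ii) and Lemma \ref{lem3.1}(b), since the total variation of the integrand is dominated by $L\,w(\xi_{u-})$, which is $du\,dP_i^\pi$-integrable over $(s,t]$.

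Finally I would specialize to $H(\omega,u,j)=\big(I_{\{j\in B\}}-I_{\{\xi_{u-}\in B\}}\big)I_{\{s<u\le t\}}$ and substitute the explicit form of $\nu^\pi$. Writing $i=\xi_{u-}$ and using $q(j|i,a)\ge0$ for $j\ne i$ together with $\sum_{j}q(j|i,a)=0$, a short computation collapses the summand,
\[
\sum_{j}\big(I_{\{j\in B\}}-I_{\{i\in B\}}\big)q(j|i,a)I_{\{j\ne i\}}=\sum_{j\in B}q(j|i,a)=q(B|i,a),
\]
so the $\nu^\pi$-integral becomes $\int_s^t\int_A q(B|\xi_{u-},a)\pi(da|\omega,u)\,du$; since $\xi_{u-}=\xi_u$ off the countable set of jump times, $\xi_{u-}$ may be replaced by $\xi_u$ under the $du$-integral, giving exactly the claimed formula. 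The main obstacle is this middle step: rigorously upgrading the unconditional dual predictable projection to its conditional version and controlling the signed, possibly $B$-infinite, integrand. The interchange of $\sum_{j\in S}$ with the integral and the conditional expectation is justified throughout by the same domination $L\,w(\xi_{u-})$ via dominated convergence.
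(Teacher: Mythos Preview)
Your proof is correct and rests on the same compensator idea as the paper, but the execution differs in two small organizational respects that are worth recording. First, instead of working with the single signed integrand $(I_{\{j\in B\}}-I_{\{\xi_{u-}\in B\}})$ against $\mu$, the paper splits the pathwise increment into two nonnegative pieces by introducing, alongside the restricted jump measure $\mu_1=I_{(s,\infty)}\mu$, a second random measure $\tilde\mu_1(\omega,dt,j)=\sum_{m\ge1}I_{\{T_m<\infty\}}I_{\{X_{m-1}=j\}}I_{(s,\infty)}(t)\delta_{T_m}(dt)$ recording the pre-jump state; its compensator $\tilde\nu_1^\pi$ is identified via a lemma of Kitaev. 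Second, to pass to conditional expectations the paper does not test against $B'\in\mathcal{F}_s$ as you do, but simply invokes that $\mu_1((0,t],B)-\nu_1^\pi((0,t],B)$ and $\tilde\mu_1((0,t],B)-\tilde\nu_1^\pi((0,t],B)$ are integrable martingales vanishing at $s$. Your route is slightly more economical (no auxiliary measure, no external lemma) at the cost of having to justify the conditional projection directly and handle a signed integrand; the paper's route avoids signed integrands entirely by keeping both counting measures nonnegative. Either way the computation collapses to $q(B|\xi_u,a)$ and the conclusion is the same.
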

\begin{proof}
Fix any $i\in S$, $B\subseteq S$, $\pi\in\Pi$,  and $s\in \mathbb{R}^0_+$. Let
$\nu_1^{\pi}(\omega, dt,j):=I_{(s,\infty)}(t)\nu^{\pi}(\omega,dt,j)$ and $\mu_1(\omega,dt,j):=I_{(s,\infty)}(t)\mu(\omega,dt,j)$, where the random measures $\nu^{\pi}$ and $\mu$ are as in (\ref{nu}) and (\ref{mu}), respectively.  Since $Y(\omega,t):=I_{(s,\infty)}(t)$ is a predictable process,
the dual predictable projection of $\mu_1$ is $\nu_1^{\pi}$.
Define the random measures below:
 $$\tilde{\mu}_1(\omega,dt,j):=\sum_{m\geq 1}I_{\{T_m<\infty\}}I_{\{X_{m-1}=j\}}I_{(s,\infty)}(t)\delta_{T_m}(dt),$$
 and
 $$\tilde{\nu}_1^{\pi}(\omega, dt,j):=I_{(s,\infty)}(t)\int_A (-q(\xi_{t-}|\xi_{t-},a))\pi(da|\omega, t)\delta_{\xi_{t-}}(j)dt.$$
 Then,
by Lemma 4 in \cite{kit85}, we have that $\tilde{\nu}^{\pi}_1$ is the dual predictable projection of $\tilde{\mu}_1$ with respect to $P_i^{\pi}$. From the proof of Theorem 3.1 in \cite{guo11}, we have
$$E_{i}^{\pi}[\mu_1((0,t],B)]<\infty  \   {\rm and} \  E_{i}^{\pi}[\tilde{\mu}_1((0,t],B)]<\infty \ \ {\rm for \ all}\ t>s.$$
Obviously, we have   the following equation
$$ I_{\{\xi_t\in B\}}=I_{\{\xi_s\in B\}}+\mu_1((0,t],B)-\tilde{\mu}_1((0,t],B).$$
Hence, taking conditional expectation in the both sides of the last equation,  we obtain
\begin{eqnarray*}
E_{i}^{\pi}[I_{\{\xi_t\in B\}}|\ \mathcal{F}_s]&=&I_{\{\xi_s\in B\}}+ E_{i}^{\pi}[\mu_1((0,t],B)|\ \mathcal{F}_s]-E_{i}^{\pi}[\tilde{\mu}_1((0,t],B)|\ \mathcal{F}_s]\\
&=&I_{\{\xi_s\in B\}}+ E_{i}^{\pi}[\nu^{\pi}_1((0,t],B)|\ \mathcal{F}_s]-E_{i}^{\pi}[\tilde{\nu}^{\pi}_1((0,t],B)|\ \mathcal{F}_s]\\
&=&I_{\{\xi_s\in B\}}+ E_{i}^{\pi}\left[\int_{s}^{t}\int_{A}q(B\setminus\{\xi_u\}|\xi_{u},a)\pi(da|\omega,u)du\bigg|\ \mathcal{F}_s\right]\\
&&+E_{i}^{\pi}\left[\int_{s}^{t}\int_{A}q(\xi_u|\xi_u,a)\pi(da|\omega,u)I_{\{\xi_u\in B\}}du\bigg|\ \mathcal{F}_s\right],
\end{eqnarray*}
where the second equality is due to the fact that $\mu_1((0,t],B)-\nu^{\pi}_1((0,t],B)$ and  $\tilde{\mu}_1((0,t],B)-\tilde{\nu}^{\pi}_1((0,t],B)$ are martingales.
\end{proof}
\section{Dynamic programming for the unconstrained case}
In this section, we will use the dynamic programming approach to show the existence of   optimal policies.  To this end, we introduce the following notation.

For any  $s\in [0,T]$,  a function $g$ defined on $S\times [s,T]$ is said to be \emph{$[s,T]$-uniformly $w^2$-bounded} if it is measurable    and there exists a constant $\tilde{M}>0$ such that $|g(i,t)|\leq \tilde{M}w^2(i)$ for all $(i,t)\in S\times[s,T].$

For any  $i,j\in S$,  $\pi\in\Pi$, and $s\in [0,T]$,
define the set
\begin{align}\label{4.4}
  \mathcal{H}_{s,j}^{i,\pi}:=\bigg\{g:  \ & g \ \mbox{is $[s,T]$-uniformly $w^2$-bounded and satisfies}\nonumber\\
&E_i^{\pi}\left[\int_s^T\int_A\sum_{k\in S}\bigg[\int_t^Tg(k,v)dv\bigg]q(k|\xi_t,a)\pi(da|\omega,t)dt\bigg|\ \xi_s=j\right]\nonumber\\
=&E_i^{\pi}\left[\int_s^Tg(\xi_t, t)dt\bigg|\ \xi_s=j\right]-\int_s^Tg(j, t)dt\bigg\}.
\end{align}

For each initial state $i\in S$, $\pi\in\Pi$, and $s\in [0,T]$,  the expected total cost from $s\geq0$ to the terminal time $T>0$ and the corresponding optimal value function are defined as
$$U(i,j,s,\pi):=E_i^{\pi}\left[\int_s^T\int_Ac_0(\xi_{t-},a)\pi(da|\omega,t)dt\bigg|\ \xi_s=j\right] \ \ {\rm and} \ \ U^*(i,j,s):=\inf_{\pi\in\Pi}U(i,j,s, \pi)$$
for all $j\in S$,
respectively.
In particular, when $s=0$, we have $U(i,i,0,\pi)=V_0(i,\pi)$.

Then we have the following \emph{new} assertion on the property of $\mathcal{H}_{s,j}^{i,\pi}$.
\begin{thm}\label{thm4.3}
Under  Assumptions  \ref{ass3.1} and \ref{ass3.3}(i),   the following statement holds:
for any  $i,j\in S$,  $\pi\in\Pi$ and $s\in [0,T]$, the set $\mathcal{H}_{s,j}^{i,\pi}$ in  {\rm(\ref{4.4})} contains all $[s,T]$-uniformly $w^2$-bounded functions.
\end{thm}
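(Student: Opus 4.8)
The defining relation of $\mathcal{H}_{s,j}^{i,\pi}$ in (\ref{4.4}) is nothing but an integrated-in-time form of the forward Kolmogorov equation of Theorem \ref{thm4.2}. The plan is therefore to fix an \emph{arbitrary} $[s,T]$-uniformly $w^2$-bounded function $g$, start from the left-hand side of the identity in (\ref{4.4}), and transform it step by step into the right-hand side. Since $g$ is arbitrary, this shows every such function lies in $\mathcal{H}_{s,j}^{i,\pi}$, which is exactly the claim.

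First I would interchange the two time integrals. Writing the inner integral as $\int_t^T g(k,v)\,dv$ with $s\le t\le v\le T$ and applying Fubini's theorem, the left-hand side becomes
$$E_i^{\pi}\left[\int_s^T\sum_{k\in S}g(k,v)\left(\int_s^v\int_A q(k|\xi_t,a)\pi(da|\omega,t)\,dt\right)dv\,\bigg|\,\xi_s=j\right].$$
Next, since $\{\xi_s=j\}\in\mathcal{F}_s$, the tower property gives $E_i^{\pi}[\,\cdot\,|\,\xi_s=j]=E_i^{\pi}[E_i^{\pi}[\,\cdot\,|\,\mathcal{F}_s]\,|\,\xi_s=j]$, so I can insert a conditioning on $\mathcal{F}_s$ and apply Theorem \ref{thm4.2} with $B=\{k\}$ to the inner expectation:
$$E_i^{\pi}\left[\int_s^v\int_A q(k|\xi_t,a)\pi(da|\omega,t)\,dt\,\bigg|\,\mathcal{F}_s\right]=P_i^{\pi}(\xi_v=k|\,\mathcal{F}_s)-I_{\{\xi_s=k\}}.$$
Summing against $g(k,v)$ and using $\sum_{k\in S}g(k,v)P_i^{\pi}(\xi_v=k|\,\mathcal{F}_s)=E_i^{\pi}[g(\xi_v,v)|\,\mathcal{F}_s]$ together with $\sum_{k}g(k,v)I_{\{\xi_s=k\}}=g(\xi_s,v)=g(j,v)$ on $\{\xi_s=j\}$, and finally collapsing the outer conditioning again by the tower property, I obtain exactly
$$E_i^{\pi}\left[\int_s^T g(\xi_v,v)\,dv\,\bigg|\,\xi_s=j\right]-\int_s^T g(j,v)\,dv,$$
which is the right-hand side of (\ref{4.4}).

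The main obstacle is not the algebra but justifying all the interchanges---Fubini between the $t$- and $v$-integrals, and the exchange of the summation $\sum_{k\in S}$, the $dv$-integral, and the conditional expectation---which all require absolute integrability. This is precisely where Assumption \ref{ass3.3}(i) enters. Because $g$ is $w^2$-bounded, the relevant integrand is dominated by $\tilde{M}\sum_{k}w^2(k)|q(k|i,a)|$; splitting off the diagonal term and using $q(i|i,a)\le 0$, Assumption \ref{ass3.1}(ii), and the $w^2$-drift bound of Assumption \ref{ass3.3}(i), this sum is bounded by a constant multiple of $w^3(i)+1$. It then remains to check $E_i^{\pi}\left[\int_s^T w^3(\xi_t)\,dt\right]<\infty$, which follows from the $w^3$-drift bound in Assumption \ref{ass3.3}(i) via the same Dynkin/Gr\"onwall argument that yields Lemma \ref{lem3.1}(b), now applied with $w$ replaced by $w^3$. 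With this uniform integrability in hand, every interchange above is legitimate and the chain of equalities is complete.
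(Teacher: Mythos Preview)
Your argument is correct, and it takes a genuinely different route from the paper's proof. The paper proceeds in three layers: it first verifies the defining identity of $\mathcal{H}_{s,j}^{i,\pi}$ for rectangular indicators $g(k,v)=I_B(k)I_{[t_*,t^*]}(v)$ by an explicit computation (integration by parts followed by Theorem~\ref{thm4.2}), then invokes the monotone class theorem to reach all bounded measurable functions, and finally extends to $w^2$-bounded functions by a separate truncation argument (deferred to Lemma~3.7 of \cite{w}). You instead attack an \emph{arbitrary} $w^2$-bounded $g$ in a single pass: Fubini in the $(t,v)$ variables, pull $\int dv$ and $\sum_k$ through $E_i^\pi[\,\cdot\,|\mathcal{F}_s]$, apply Theorem~\ref{thm4.2} for each singleton $\{k\}$, and reassemble. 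The integrability estimate $\sum_k w^2(k)|q(k|i,a)|\le(\rho_2+b_2+2L)w^3(i)$ together with the $w^3$-moment bound from Assumption~\ref{ass3.3}(i) is exactly what legitimises every one of these exchanges, and you identify it correctly.

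What your approach buys is brevity and transparency: there is no need to build up from indicators and no appeal to an external truncation lemma. What the paper's approach buys is that each individual step (the indicator calculation, the monotone-class passage) is elementary and requires no careful tracking of conditional-Fubini hypotheses; the analytic burden is postponed to the final extension step. Both are valid; yours is the more streamlined.
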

\begin{proof}
 Fix any $i,j\in S$, $\pi\in\Pi$, and $s\in[0,T]$. By Proposition 2.1 in \cite{pi11},  we obtain
\begin{eqnarray*}
&&E_i^{\pi}\left[\int_s^T\int_A\sum_{k\in S}w^2(k)|q(k|\xi_t,a)|\pi(da|\omega,t)dt\right]\\
 &\leq& (\rho_2+b_2+2L)E_i^{\pi}\left[\int_s^Tw^3(\xi_t)dt\right]\\
 &\leq&(\rho_2+b_2+2L) \int_s^T\left[e^{\rho_3 t}w^3(i)+\frac{b_3}{\rho_3}(e^{\rho_3t}-1)\right]dt\\
  &\leq&(\rho_2+b_2+2L)T\bigg[e^{\rho_3T}w^3(i)+\frac{b_3}{\rho_3}(e^{\rho_3T}-1)\bigg].
\end{eqnarray*}
If $g$  is $[s,T]$-uniformly $w^2$-bounded, from the last inequality, we have
\begin{eqnarray*}
E_i^{\pi}\left[\int_s^T\int_A\sum_{k\in S}\bigg|\int_t^Tg(k,v)dv\bigg||q(k|\xi_t,a)|\pi(da|\omega,t)dt\bigg|\ \xi_s=j\right]<\infty.
\end{eqnarray*}
Hence, $E_i^{\pi}\left[\int_s^T\int_A\sum_{k\in S}\left[\int_t^Tg(k,v)dv\right]q(k|\xi_t,a)\pi(da|\omega,t)dt\big|\ \xi_s=j\right]$ is well defined. Using the similar  arguments, we see that  $E_i^{\pi}\left[\int_s^Tg(\xi_t, t)dt\big|\ \xi_s=j\right]$ is well defined.

Let  $\mathcal{C}:=\{B\times [t_*,t^*]: B\subseteq S,  \ s\leq t_*\leq t^*\leq T\}.$ Then, it is obvious  that $\mathcal{C}$ is a $\pi$-system and $S\times [s,T]\in \mathcal{C}$.  Below we will use the monotone class theorem to show that $\mathcal{H}_{s,j}^{i,\pi}$ contains all the bounded measurable functions on $S\times [s,T]$.

(i) For any $B\times[t_*,t^*]\in \mathcal{C}$,  we will  show that $I_B(k)I_{[t_*,t^*]}(t)\in \mathcal{H}_{s,j}^{i,\pi}.$  Set $t_1\vee t_2:=\max\{t_1,t_2\}$ and  $t_1\wedge t_2:=\min\{t_1,t_2\}$.  Direct calculations yield
\begin{eqnarray*}
&&E_i^{\pi}\left[\int_s^T\int_A\sum_{k\in S}\bigg[\int_t^TI_B(k)I_{[t_*,t^*]}(v)dv\bigg]q(k|\xi_t,a)\pi(da|\omega,t)dt\bigg|\ \xi_s=j\right]\\
&=&E_i^{\pi}\left[\int_s^T[t^*-(t_*\vee t)\wedge t^*]\int_Aq(B|\xi_t,a)\pi(da|\omega,t)dt\bigg|\ \xi_s=j\right] \\
&=&E_i^{\pi}\left[\int_s^{t_*}[t^*-(t_*\vee t)\wedge t^*]\int_Aq(B|\xi_t,a)\pi(da|\omega,t)dt\bigg|\ \xi_s=j\right]\\
&&+E_i^{\pi}\left[\int_{t_*}^{t^*}[t^*-(t_*\vee t)\wedge t^*]\int_Aq(B|\xi_t,a)\pi(da|\omega,t)dt\bigg|\ \xi_s=j\right]\\
&=&(t^*- t_*)E_i^{\pi}\left[\int_s^{t_*}\int_Aq(B|\xi_t,a)\pi(da|\omega,t)dt\bigg|\ \xi_s=j\right]\\
&&+E_i^{\pi}\left[\int_{t_*}^{t^*}(t^*-t)\int_Aq(B|\xi_t,a)\pi(da|\omega,t)dt\bigg|\ \xi_s=j\right]\\
&=&E_i^{\pi}\left[\int_{t_*}^{t^*}\int_s^{t}\int_Aq(B|\xi_v,a)\pi(da|\omega,v)dvdt\bigg|\ \xi_s=j\right]\\
&=&\int_{t_*}^{t^*}E_i^{\pi}\left[\int_s^{t}\int_Aq(B|\xi_v,a)\pi(da|\omega,v)dv\bigg|\ \xi_s=j\right]dt\\
&=&\int_{t_*}^{t^*}E_i^{\pi}[I_B(\xi_t)|\ \xi_s=j]dt-\int_{t_*}^{t^*}I_B(j)dt\\
&=&E_{i}^{\pi}\left[\int_s^TI_B(\xi_t)I_{[t_*,t^*]}(t)dt\bigg|\ \xi_s=j\right]-\int_s^TI_B(j)I_{[t_*,t^*]}(t)dt,
\end{eqnarray*}
where the fourth equality is due to  the integration by parts,   and the sixth one follows from  Theorem \ref{thm4.2}.  Hence, we have $I_B(k)I_{[t_*,t^*]}(t)\in \mathcal{H}_{s,j}^{i,\pi}.$

(ii) If $0\leq g_n\in \mathcal{H}_{s,j}^{i,\pi}$ $(n=1,2,\ldots)$,  $g_n\uparrow g_0$ and $g_0$ is bounded,  applying the monotone convergence theorem, we have $g_0\in \mathcal{H}_{s,j}^{i,\pi}$.

Obviously,   $\mathcal{H}_{s,j}^{i,\pi}$ is a linear space. Hence, (i), (ii) and the monotone class theorem give that $\mathcal{H}_{s,j}^{i,\pi}$ contains all measurable bounded functions on $S\times [s,T]$. Therefore,  the desired assertion follows from   the same technique of Lemma 3.7 in \cite{w}.
\end{proof}
Using Theorem \ref{thm4.3}, we obtain the  main result on the existence of finite-horizon optimal policies for the case of unbounded transition and cost rates below.
\begin{thm}\label{thm4.4}
  Suppose that Assumptions  \ref{ass3.1}, \ref{ass3.2}(ii), and \ref{ass3.3} hold. Then  we have
  \begin{itemize}
    \item[{\rm(a)}] For any given $i\in S$, the   function $U^*(i,\cdot)$ is the unique solution in $B_{w}(S\times [0,T])$ to the following equation: for each $j\in S$ and $s\in [0,T]$,
        \begin{eqnarray}\label{4.6}
          g(j,s)=\int_s^T\inf_{a\in A(j)}\left\{c_0(j,a)+\sum_{k\in S}g(k,t)q(k|j,a)\right\}dt,
        \end{eqnarray}
        where $B_{w}(S\times [0,T])$ denotes the set of all real-valued measurable functions $g$ on $S\times [0,T]$ with
      $\sup_{j\in S}\sup_{s\in[0,T]}\frac{|g(j,s)|}{w(j)}<\infty.$
    \item[{\rm (b)}] There exists an  optimal deterministic Markov policy  $\pi^*_T$ (depending on $T$).
  \end{itemize}
\end{thm}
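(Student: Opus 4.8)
The plan is to prove both parts simultaneously through a verification argument built on Theorem \ref{thm4.3}, reserving a separate construction for the existence of a solution. The core claim I would establish first is that \emph{any} solution $g\in B_{w}(S\times[0,T])$ of the optimality equation (\ref{4.6}) must coincide with $U^*(i,\cdot)$ and is realized by a deterministic Markov policy. Since (\ref{4.6}) does not involve $i$, this one claim simultaneously yields uniqueness, the independence of the solution from $i$, and the optimal policy of part (b); the existence of a solution then becomes the only remaining ingredient.

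For the verification step I would fix a solution $g\in B_{w}(S\times[0,T])$ and set $G(j,t):=\inf_{a\in A(j)}\{c_0(j,a)+\sum_{k\in S}g(k,t)q(k|j,a)\}$, so that (\ref{4.6}) reads $g(j,s)=\int_s^T G(j,t)\,dt$, equivalently $g(k,t)=\int_t^T G(k,v)\,dv$. First I would verify that $G$ is $[s,T]$-uniformly $w^2$-bounded: combining $|c_0(j,a)|\le Mw(j)$ from Assumption \ref{ass3.2}(ii), the Lyapunov bound $\sum_k w(k)q(k|j,a)\le\rho_1 w(j)+b_1$ from Assumption \ref{ass3.1}(i), and $q^*(j)\le Lw(j)$ from Assumption \ref{ass3.1}(ii) gives $\sum_k w(k)|q(k|j,a)|\le(\rho_1+b_1+2L)w^2(j)$, whence $|G(j,t)|\le\tilde Mw^2(j)$. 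Assumption \ref{ass3.3}(ii)--(iii) (compactness of $A(j)$ and continuity in $a$ of $c_0$, $q(k|j,\cdot)$ and $\sum_k w(k)q(k|j,\cdot)$), together with a generalized dominated-convergence argument, make $a\mapsto c_0(j,a)+\sum_k g(k,t)q(k|j,a)$ continuous, so the infimum defining $G$ is attained and a measurable selection theorem supplies a measurable $f^*(j,t)\in A(j)$ achieving it. I would then apply Theorem \ref{thm4.3} to the function $G$ itself: because $\int_t^T G(k,v)\,dv=g(k,t)$ and $\int_s^T G(j,t)\,dt=g(j,s)$, the identity in (\ref{4.4}) becomes, for every $\pi\in\Pi$,
\[
E_i^{\pi}\Big[\int_s^T\!\!\int_A\sum_{k\in S}g(k,t)q(k|\xi_t,a)\pi(da|\omega,t)\,dt\,\Big|\,\xi_s=j\Big]=E_i^{\pi}\Big[\int_s^T G(\xi_t,t)\,dt\,\Big|\,\xi_s=j\Big]-g(j,s).
\]

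Combining this with the pointwise inequality $\int_A[c_0(\xi_t,a)+\sum_k g(k,t)q(k|\xi_t,a)]\pi(da|\omega,t)\ge G(\xi_t,t)$ (the defining property of the infimum) and applying $E_i^{\pi}[\,\cdot\,|\,\xi_s=j]$ yields $U(i,j,s,\pi)\ge g(j,s)$ for every $\pi$, hence $U^*(i,j,s)\ge g(j,s)$. Taking $\pi$ to be the deterministic Markov policy $f^*$ turns the pointwise inequality into an equality for a.e.\ $t$, so $U(i,j,s,f^*)=g(j,s)$ and therefore $U^*(i,j,s)=g(j,s)=U(i,j,s,f^*)$. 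Setting $s=0$, $j=i$ shows that $f^*$ (which depends on $T$) is finite-horizon optimal, giving part (b); and since any two solutions in $B_{w}(S\times[0,T])$ both equal $U^*(i,\cdot)$, uniqueness follows, as does the independence of the solution from $i$.

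It remains to produce a solution in $B_{w}(S\times[0,T])$, and this is the step I expect to be the main obstacle. That $U^*(i,\cdot)\in B_{w}(S\times[0,T])$ follows from Theorem \ref{thm4.1}(b): conditioning on $\xi_s=j$ gives $E_i^{\pi}[w(\xi_t)\,|\,\xi_s=j]\le(e^{\rho_1(t-s)}+1)w(j)+\frac{b_1}{\rho_1}(e^{\rho_1(t-s)}-1)$, so $|U(i,j,s,\pi)|\le M\int_s^T E_i^{\pi}[w(\xi_t)\,|\,\xi_s=j]\,dt\le Cw(j)$ uniformly in $\pi$. The difficulty is that the natural value-iteration operator $\mathcal{T}g(j,s):=\int_s^T\inf_{a}\{c_0(j,a)+\sum_k g(k,t)q(k|j,a)\}\,dt$ does \emph{not} map $B_{w}(S\times[0,T])$ into itself, since the $w^2$-growth of $G$ shown above makes $\mathcal{T}g$ only $w^2$-bounded; and because the rates are unbounded, neither a contraction/fixed-point argument nor uniformization is available. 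I would instead construct the solution as the limit of value functions of truncated (finite-number-of-jumps) problems: each such approximate value function is itself an optimal cost and hence is $w$-bounded \emph{uniformly} via the estimate of Theorem \ref{thm4.1}, rather than by iterating the $w$-norm, while the monotone/dominated convergence afforded by Lemma \ref{lem3.1} and Theorem \ref{thm4.1} lets me pass to the limit and check that the limit satisfies (\ref{4.6}). Keeping this limit inside $B_{w}(S\times[0,T])$ while making it solve (\ref{4.6}) exactly is the delicate point; once it is secured, the verification step above closes the proof of both (a) and (b).
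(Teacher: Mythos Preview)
Your verification argument is essentially the paper's: the paper applies Theorem~\ref{thm4.3} to $\frac{\partial g}{\partial s}$, but since a solution of (\ref{4.6}) satisfies $g(j,s)=\int_s^T G(j,t)\,dt$ and hence $\frac{\partial g}{\partial s}=-G$, this is exactly your application of Theorem~\ref{thm4.3} to $G$; the resulting identity (your displayed equation is the paper's (\ref{4.8})), the pointwise inequality giving $g\le U^*$, and the measurable-selection step yielding $g=U(i,\cdot,\cdot,f^*)$ all match. The only substantive difference concerns the \emph{existence} of a solution to (\ref{4.6}) in $B_w(S\times[0,T])$: the paper does not construct one here but simply cites \cite{w} (the authors' companion paper) for a $g$ satisfying (\ref{4.6}) together with differentiability in $s$. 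Your proposed route via limits of finite-jump-truncated value functions is a reasonable alternative and is indeed the standard device when uniformization and contraction arguments are unavailable, but be aware that the paper sidesteps this difficulty entirely by importing the result, so your sketch would have to be filled in independently rather than by appeal to anything proved in the present paper.
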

\begin{proof}
(a) Fix any $i\in S$, $\pi\in\Pi$, and $s\in [0,T]$. Then,
direct calculations give
\begin{eqnarray*}
  |U(i,j,s,\pi)|&\leq& E_i^{\pi}\left[\int_s^T\int_A|c_0(\xi_{t-},a)|\pi(da|\omega,t)dt\bigg|\ \xi_s=j\right]\\
  &\leq& ME_i^{\pi}\left[\int_s^Tw(\xi_t)dt\bigg|\ \xi_s=j\right]\\
  &=&M\int_s^TE_i^{\pi}[w(\xi_t)|\ \xi_s=j]dt\\
  &\leq& M\int_s^T\left[(e^{\rho_1(t-s)}+1)w(j)+\frac{b_1}{\rho_1}(e^{\rho_1(t-s)}-1)\right]dt\\
  &\leq& MT\left[(e^{\rho_1T}+1)+\frac{b_1}{\rho_1}(e^{\rho_1T}-1)\right]w(j)
\end{eqnarray*}
for all $j\in S$,
where the second inequality is due to Assumption \ref{ass3.2}(ii), and the  fourth one follows from  Theorem \ref{thm4.1}. Hence, we have
$$\sup_{j\in S}\sup_{s\in [0,T]}\frac{|U^*(i,j,s)|}{w(j)}\leq MT\left[(e^{\rho_1T}+1)+\frac{b_1}{\rho_1}(e^{\rho_1T}-1)\right]<\infty.$$
Moreover, it follows from the proof of  Theorem 4.1 in \cite{w}  that there exists a function $g$ on $S\times [0,T]$ satisfying  (\ref{4.6}), and that for each $j\in S$, the partial  derivative of $g$ with respect to the second variable $s$ exists, denoted by $\frac{\partial g}{\partial s}$. Thus, we obtain
\begin{eqnarray}\label{4.7}
-\frac{\partial g}{\partial s}(j,s)=\inf_{a\in A(j)}\left\{c_0(j,a)+\sum_{k\in S}g(k,s)q(k|j,a)\right\}\ \ {\rm for \ all} \ j\in S.
\end{eqnarray}
The measurable selection theorem in \cite[p.50]{one99} and Assumption \ref{ass3.3} imply that for each $j\in S$, the function  $\frac{\partial g}{\partial s}(j,\cdot)$ on $[0,T]$ is measurable. Set $M^*:=\sup_{j\in S}\sup_{s\in[0,T]}\frac{|g(j,s)|}{w(j)}$.  Then it follows from (\ref{4.7}),  Assumptions \ref{ass3.1} and \ref{ass3.2}(ii) that
$$\sup_{j\in S}\sup_{s\in [0,T]}\frac{|\frac{\partial g}{\partial s}(j,s)|}{w^2(j)}\leq M+M^*(\rho_1+b_1+2L).$$
Hence,  $\frac{\partial g}{\partial s}$ is a $[0,T]$-uniformly $w^2$-bounded function. Note that $g(j,T)=0$ for all $j\in S$. Therefore, by Theorem \ref{thm4.3},   we have that for each $j\in S$,
\begin{eqnarray}\label{4.8}
  &&E_i^{\pi}\left[\int_s^T\int_A\sum_{k\in S}\bigg[\int_t^T\frac{\partial g}{\partial v}(k,v)dv\bigg]q(k|\xi_t,a)\pi(da|\omega,t)dt\bigg|\ \xi_s=j\right]\nonumber\\
  &=&E_i^{\pi}\left[\int_s^T\frac{\partial g}{\partial t}(\xi_t, t)dt\bigg|\ \xi_s=j\right]-\int_s^T\frac{\partial g}{\partial t}(j, t)dt\nonumber\\
  &=&-E_i^{\pi}\left[\int_s^T\int_A\sum_{k\in S}g(k,t)q(k|\xi_t,a)\pi(da|\omega,t)dt\bigg|\ \xi_s=j\right].
\end{eqnarray}
On one hand, by (\ref{4.7}), we obtain
$$-\int_s^T\frac{\partial g}{\partial t}(\xi_t,t)dt\leq \int_s^T\int_Ac_0(\xi_t,a)\pi(da|\omega,t)dt+\int_s^T\int_A\sum_{k\in S}g(k,t)q(k|\xi_t,a)\pi(da|\omega,t)dt.$$
Taking the conditional expectation in the both sides of the last inequality, by (\ref{4.8}),  we have
\begin{eqnarray*}
&&-E_i^{\pi}\left[\int_s^T\frac{\partial g}{\partial t}(\xi_t,t)dt\bigg|\ \xi_s=j\right]\\
&\leq& U(i,j,s,\pi)+E_i^{\pi}\left[\int_s^T\int_A\sum_{k\in S}g(k,t)q(k|\xi_t,a)\pi(da|\omega,t)dt\bigg|\ \xi_s=j\right]\\
&=&U(i,j,s,\pi)+\int_s^T\frac{\partial g}{\partial t}(j, t)dt-E_i^{\pi}\left[\int_s^T\frac{\partial g}{\partial t}(\xi_t, t)dt\bigg|\ \xi_s=j\right]\\
&=&U(i,j,s,\pi)-g(j,s)-E_i^{\pi}\left[\int_s^T\frac{\partial g}{\partial t}(\xi_t, t)dt\bigg|\ \xi_s=j\right],
\end{eqnarray*}
which implies $g(j,s)\leq U(i,j,s,\pi)$ for all $j\in S$. By the arbitrariness of $\pi$,  we obtain
\begin{eqnarray}\label{4.9}
g(j,s)\leq U^*(i,j,s) \ \ {\rm for \ all} \ j\in S \ {\rm and} \ s\in[0,T].
\end{eqnarray}
On the other hand, it follows from Assumption \ref{ass3.3} and the measurable selection theorem in \cite[p.50]{one99} that there exists a measurable function $f^*$ on $S\times [0,T]$ satisfying  $f^*(j,s)\in A(j)$ and
\begin{eqnarray*}
-\frac{\partial g}{\partial s}(j,s)=c_0(i,f^*(j,s))+\sum_{k\in S}g(k,s)q(k|j,f^*(j,s))
\end{eqnarray*}
for   all $j\in S$ and $s\in [0,T]$.  Let $\pi^*(\cdot|\ \omega, s):=\delta_{f^*(\xi_{s-}(\omega), s)}(\cdot)$. Then, combining the last equality and following the similar arguments of (\ref{4.9}), we have
\begin{eqnarray}\label{4.10}
g(j,s)= U(i,j,s,\pi^*)\geq U^*(i,j,s)\ \ {\rm for \ all} \ j\in S \ {\rm and} \ s\in[0,T].
\end{eqnarray}
Hence, the statement follows from (\ref{4.9}) and (\ref{4.10}).

(b) Part (b) follows directly from the proof of part (a).
\end{proof}
\begin{rem}
{\rm (a) Theorem \ref{thm4.4} indicates that there exists a finite-horizon optimal deterministic Markov policy in the class of all randomized history-dependent policies.

(b) The existence of a solution to the equality (\ref{4.6}),  the so-called optimality equation for finite-horizon criteria, has been established in \cite{bau,gih,pli,mill,van,w,yush}. More precisely, the transition rates are assumed to be bounded in \cite{bau,mill,gih,pli,yush}, and unbounded in \cite{van,w}. However,  the fact that the finite-horizon optimal value function is the unique solution to the optimality equation  and the existence of optimal policies have not been studied in \cite{van}. The discussions on the existence of optimal policies are restricted to the class of all randomized Markov policies  in the aforementioned works. Theorem  \ref{thm4.4} deals with the finite-horizon criteria in the class of all randomized history-dependent policies, and extends  the results in the previous literature. It should be noted that the controlled state process does not have the Markov property under any randomized history-dependent policy, the extension is nontrivial. Moreover, the fixed point theorem and uniformization techniques are inapplicable to the case of unbounded transition rates.

}
\end{rem}
\section{Linear programming for the constrained case}
In this section we will use the convex analytic approach by introducing the occupation measures of  the finite-horizon criteria to deal with the  constrained case.

Let $w$ be as in Assumption \ref{ass3.1}, and $X:=[0,T]\times K$ is endowed with Borel $\sigma$-algebra $\mathcal{B}(X)$.  $B_{w}(X)$ denotes the set of real-valued measurable functions on $X$ with finite norm $\|g\|_{w}:=\sup_{t\in [0,T]}\sup_{(i,a)\in K}\frac{|g(t,i,a)|}{w(i)}$. Denote by $C_w(X)$ the set of all continuous functions in $B_w(X)$, and by    $\mathcal{P}_w(X)$  the set of all probability measures $\eta$ on $\mathcal{B}(X)$ satisfying $\sum_{i\in S}w(i)\overline{\eta}(i)<\infty$,  where $\overline{\eta}(i):=\eta([0,T],\{i\}, A)$ for all $i\in S$. The set  $\mathcal{P}_w(X)$  is endowed with the $w$-weak topology  for which all mappings $\eta\mapsto\int_{[0,T]}\sum_{i\in S}\int_A g(t,i,a)\eta(dt,i,da)$ are continuous for each $g\in C_w(X)$. Moreover, because $X$ is metrizable,  it follows from Corollary A.44 in \cite{schied} that $\mathcal{P}_w(X)$ is metrizable with respect to the $w$-weak topology.

For each $\pi\in\Pi$, we define the occupation measure of finite-horizon criteria on $\mathcal{B}(X)$ corresponding to $\pi$ by
$$\eta^{\pi}(dt,i, da):=\frac{1}{T}E_{\gamma}^{\pi}\left[I_{\{\xi_{t}=i\}}\pi(da|\omega,t)\right]dt$$
for any $i\in S$. The set of all occupation measures is denoted by $\mathcal{D}$, i.e., $\mathcal{D}:=\{\eta^{\pi}:\ \pi\in\Pi\}$.
Moreover, define
 \begin{eqnarray}\label{5-0}
   \mathcal{D}_0:=\left\{\eta\in \mathcal{D}:\ T\int_{[0,T]}\sum_{i\in S}\int_A c_n(i,a)\eta(dt,i,a)\leq d_n \ {\rm for \ all} \ 1\leq n\leq N\right\}.
 \end{eqnarray}

Before investigating the properties of $\mathcal{D}$, we impose the following condition.
\begin{ass}\label{ass5.1}
\begin{itemize}
\item[{\rm(i)}] For each integer $m\geq1$,  the set $S_m:=\{i\in S: w(i)\leq m\}$ is nonempty and finite, where $w$ is as in Assumption \ref{ass3.1}.
\item[{\rm (ii)}] $\gamma(w^2):=\sum_{i\in S}w^2(i)\gamma(i)<\infty$ and $\gamma(w^3):=\sum_{i\in S}w^3(i)\gamma(i)<\infty$.
\end{itemize}
\end{ass}
\begin{rem}
{\rm
 (a) If $S$ is assumed to be the set of all nonnegative integers and  the function $w$ on $S$ satisfies $\lim\limits_{i\to\infty}w(i)=\infty$, Assumption \ref{ass5.1}(i) holds.

 (b) Assumption \ref{ass5.1}(i) is used to obtain the compactness  of the set of all occupation measures in the $w$-weak topology.
  }
\end{rem}
 Under Assumptions \ref{ass3.1}, \ref{ass3.2}(i),  \ref{ass3.3}(i), and \ref{ass5.1}(ii), Proposition 2.1 in \cite{pi11} gives
\begin{eqnarray}\label{5.1}
\int_0^{T}\sum_{i\in S}\int_Aw^2(i)\eta^{\pi}(dt,i, da)=\frac{1}{T}\int_0^TE_{\gamma}^{\pi}[w^2(\xi_t)]dt\leq e^{\rho_2T}\gamma(w^2)+\frac{b_2}{\rho_2}(e^{\rho_2 T}-1)<\infty
\end{eqnarray}
for all $\pi\in\Pi$. Hence, we have $\mathcal{D}_0\subseteq\mathcal{D}\subseteq \mathcal{P}_{w^2}(X)$.

Now we give  the properties of the occupation measures of finite-horizon criteria below.
\begin{thm}\label{thm5.1}
Suppose that Assumptions \ref{ass3.1}-\ref{ass3.3}, and \ref{ass5.1} hold. Then we have
\begin{itemize}
\item[{\rm(a)}] If $\eta\in \mathcal{P}_{w^2}(X)$, then $\eta\in\mathcal{D}$ if and only if
\begin{eqnarray}\label{5.2}
&&T\int_{[0,T]}\sum_{i\in S}\int_{A}\sum_{j\in S}\int_t^Tg(j,v)dvq(j|i,a)\eta(dt,i,da)\nonumber\\
&=&T\int_{[0,T]}\sum_{i\in S}g(i,t)\eta(dt,i,A)-\int_{[0,T]}\sum_{i\in S}g(i,t)\gamma(i)dt
\end{eqnarray}
for each $g\in C_w(S\times[0,T])$.
\item[{\rm (b)}] The sets $\mathcal{D}$ and $\mathcal{D}_0$ are convex and compact in the $w^2$-weak topology.
\end{itemize}
\end{thm}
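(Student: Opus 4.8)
The plan is to prove part (a) by establishing the two implications separately, and then to feed the resulting characterization into a relative-compactness-plus-closedness argument for part (b).

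For the necessity direction of (a), I would start from an occupation measure $\eta=\eta^{\pi}$ and merely unfold its definition. Writing $E_{\gamma}^{\pi}[\cdot]=\sum_{i}\gamma(i)E_{i}^{\pi}[\cdot]$ and using that $\xi_{t-}=\xi_t$ for Lebesgue-almost every $t$, both sides of (\ref{5.2}) become expectations of the form $E_{\gamma}^{\pi}\big[\int_0^T(\cdots)\pi(da|\omega,t)\,dt\big]$. The identity (\ref{5.2}) then reduces, state by state against $\gamma(i)$, to the defining relation of the set $\mathcal{H}^{i,\pi}_{0,i}$ in (\ref{4.4}); note that at $s=0$ the conditioning $\{\xi_0=j\}$ is non-degenerate only for $j=i$. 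Since any $g\in C_w(S\times[0,T])$ is in particular $[0,T]$-uniformly $w^2$-bounded (because $w\ge1$), Theorem \ref{thm4.3} applies and delivers (\ref{5.2}).

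For the sufficiency direction, given $\eta\in\mathcal{P}_{w^2}(X)$ satisfying (\ref{5.2}), I would disintegrate it as $\eta(dt,i,da)=\lambda(dt,i)\varphi(da|t,i)$, where $\lambda(dt,i):=\eta(dt,i,A)$ is the time--state marginal and $\varphi$ is a stochastic kernel concentrated on $A(i)$ (since $\eta$ lives on $K$). Setting $\pi^{\varphi}(da|\omega,t):=\varphi(da|t,\xi_{t-}(\omega))$ defines a randomized Markov policy whose occupation measure $\eta^{\pi^{\varphi}}$ has the same action kernel $\varphi$ and, by the necessity part, also satisfies (\ref{5.2}); so it remains to identify the two marginals $\lambda$ and $\lambda^{\pi^{\varphi}}$. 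Specializing (\ref{5.2}) to $g(j,v)=\delta_{j,i_0}h(v)$ with $h$ continuous and integrating by parts in the $\int_t^T$ term, I would show that any marginal satisfying (\ref{5.2}) is absolutely continuous in $t$ and its density $p_{i_0}$ solves the forward Kolmogorov (Volterra) equation $p_{i_0}(t)=\frac{\gamma(i_0)}{T}+\int_0^t\sum_{i}\bar{q}^{\varphi}(i_0|i,s)p_i(s)\,ds$, where $\bar{q}^{\varphi}(i_0|i,s):=\int_A q(i_0|i,a)\varphi(da|s,i)$. Both $\lambda$ and $\lambda^{\pi^{\varphi}}$ solve this linear system, and a Gronwall / successive-approximation argument, with the relevant series kept summable by Assumption \ref{ass3.1} and the moment bounds, gives uniqueness; hence $\eta=\eta^{\pi^{\varphi}}\in\mathcal{D}$.

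For (b), convexity is immediate: by (a), $\mathcal{D}=\{\eta\in\mathcal{P}_{w^2}(X):(\ref{5.2})\text{ holds for all }g\in C_w\}$ is the intersection of the convex set $\mathcal{P}_{w^2}(X)$ with affine constraints, and $\mathcal{D}_0$ adds the half-spaces $\{T\int c_n\,d\eta\le d_n\}$. For compactness I would combine relative compactness with closedness in the $w^2$-weak topology. Relative compactness rests on two uniform estimates on $\mathcal{D}$ coming from (\ref{5.1}) and its $w^3$-analogue (available through Assumptions \ref{ass3.3}(i) and \ref{ass5.1}(ii)): tightness, since $\eta(\{w(i)>m\})\le m^{-2}\sum_i w^2(i)\overline{\eta}(i)$ is uniformly small while the sets $[0,T]\times\{(i,a):i\in S_m\}$ are compact by Assumptions \ref{ass5.1}(i) and \ref{ass3.3}(ii); and uniform integrability of the moment function, since $\int_{\{w^2>R\}}w^2\,d\eta\le R^{-1/2}\sum_i w^3(i)\overline{\eta}(i)$ is uniformly small. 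Closedness follows by passing to the $w^2$-weak limit in (\ref{5.2}), the crux being that $\psi_g(t,i,a):=\sum_j\big(\int_t^T g(j,v)\,dv\big)q(j|i,a)$ lies in $C_{w^2}(X)$: the bound $\sum_j w(j)|q(j|i,a)|\le(\rho_1+b_1+2L)w^2(i)$ gives $w^2$-boundedness, and Assumption \ref{ass3.3}(iii) with a generalized dominated-convergence argument gives continuity in $a$. Finally $\mathcal{D}_0$ is a closed subset of the compact convex $\mathcal{D}$, since each $c_n\in C_{w^2}(X)$ makes $\eta\mapsto\int c_n\,d\eta$ continuous, so $\mathcal{D}_0$ is compact and convex as well.

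The main obstacle is the sufficiency direction of (a): building the randomized Markov policy by disintegration is routine, but identifying its occupation measure with $\eta$ hinges on the absolute continuity of the marginal and the uniqueness of the solution to the forward Kolmogorov equation with \emph{unbounded} rates, which is precisely where the higher-moment Assumptions \ref{ass3.3}(i) and \ref{ass5.1}(ii) enter to keep the relevant series finite. In (b) the only delicate point is verifying $\psi_g\in C_{w^2}(X)$, so that (\ref{5.2}) is stable under $w^2$-weak limits; the tightness and uniform-integrability estimates themselves are routine consequences of the moment bounds.
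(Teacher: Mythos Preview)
Your treatment of the necessity direction in (a) and the whole of (b) matches the paper: both invoke Theorem~\ref{thm4.3} to pass from the definition of $\eta^{\pi}$ to (\ref{5.2}), and both obtain compactness of $\mathcal{D}$ by combining the uniform $w^{3}$-moment bound (for tightness and $w^{2}$-uniform integrability via Assumption~\ref{ass5.1}) with the observation that $(t,i,a)\mapsto\sum_{j}\big(\int_{t}^{T}g(j,v)\,dv\big)q(j|i,a)$ lies in $C_{w^{2}}(X)$, so that (\ref{5.2}) passes to $w^{2}$-weak limits.

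The sufficiency direction in (a) is where you and the paper diverge. You propose a \emph{forward} argument: disintegrate $\eta$, read off from (\ref{5.2}) with $g(j,v)=\delta_{j,i_{0}}h(v)$ that the state-marginal density $p_{i_{0}}(t)$ satisfies the forward Kolmogorov equation for $\pi^{\varphi}$, and conclude by uniqueness of that equation. The paper instead argues \emph{dually}: for each bounded test function $h$ it introduces the backward object
\[
H(j,t):=E^{\pi^{*}}\!\left[\int_{t}^{T}\!\!\int_{A}h(s,\xi_{s},a)\varphi(da|\xi_{s},s)\,ds\,\Big|\,\xi_{t}=j\right],
\]
derives $-\partial_{t}H(j,t)=\int_{A}h(t,j,a)\varphi(da|j,t)+\int_{A}\sum_{k}H(k,t)q(k|j,a)\varphi(da|j,t)$ from Theorem~\ref{thm4.2} and a monotone-class step, and then substitutes $\partial_{t}H$ into (\ref{5.2}) (extended to $B_{w}$) to obtain $\int h\,d\eta=\frac{1}{T}\sum_{i}H(i,0)\gamma(i)=\int h\,d\eta^{\pi^{*}}$ directly, never isolating the forward equation for the unknown marginal at all.

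Both routes work, and yours is arguably more transparent and yields Remark~\ref{rem5.2}(b) in the same stroke. The trade-off is that your route rests on uniqueness of the forward Kolmogorov equation with \emph{unbounded} rates, and your phrase ``Gronwall / successive-approximation'' undersells what is needed: a $w$-weighted Gronwall does not close, since $\sum_{j}w(j)|q(j|i,a)|$ is controlled by $w^{2}(i)$ rather than $w(i)$, and iterating pushes the required moment up indefinitely. What actually carries the day is nonexplosion (Lemma~\ref{lem3.1}(a)) together with the fact that both candidate marginals are genuine probability distributions on $S$ for a.e.\ $t$: the minimal solution is then stochastic, and any stochastic solution of the forward equation must coincide with it. The paper's backward approach sidesteps this external input at the price of the somewhat technical detour through $H$ and the auxiliary class $\mathcal{L}$, keeping the argument self-contained within the machinery of Theorems~\ref{thm4.2} and~\ref{thm4.3}.
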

\begin{proof}
(a) If $\eta\in \mathcal{D}$, by the definition of $\mathcal{D}$, there exists $\pi\in\Pi$ such that $\eta=\eta^{\pi}$. Fix any $g\in C_w(S\times [0,T])$. Since the function $g$ is $[0,T]$-uniformly $w^2$-bounded,   it follows from Theorem \ref{thm4.3} that
\begin{eqnarray*}
&&T\int_0^T\sum_{i\in S}\int_{A}\sum_{j\in S}\int_t^Tg(j,v)dvq(j|i,a)\eta^{\pi}(dt,i,da)\\
  &=&E_{\gamma}^{\pi}\left[\int_0^T\int_A\sum_{j\in S}\int_t^Tg(j,v)dvq(j|\xi_t,a)\pi(da|\omega, t)dt\right]\\
  &=&E_{\gamma}^{\pi}\left[\int_0^Tg(\xi_t, t)dt\right]-\sum_{i\in S}\int_0^Tg(i,t)dt\gamma(i)\\
  &=&T\int_0^T\sum_{i\in S}g(i,t)\eta^{\pi}(dt,i,A)-\int_0^T\sum_{i\in S}g(i,t)\gamma(i)dt.
\end{eqnarray*}
Hence, $\eta$ satisfies (\ref{5.2}). Conversely, suppose that (\ref{5.2}) holds for some $\eta\in \mathcal{P}_{w^2}(X)$. By Proposition D.8 in \cite[p.184]{one96}, there exists a kernel $\varphi$ on $A$ given $S\times [0,T]$ satisfying $\varphi(A(i)|i,t)=1$ for all $(i,t)\in S\times [0,T]$, and $\eta(dt,i,da)=\eta(dt,i,A)\varphi(da|i,t)$. Let  $\pi^*(\cdot|\ \omega,t):=\varphi(\cdot| \xi_{t-}(\omega), t)$. Below we will show that $\eta=\eta^{\pi^*}$. This is equivalent to proving
\begin{eqnarray}\label{5.3}
\int_{[0,T]}\sum_{i\in S}\int_A h(t,i,a)\eta(dt,i,da)=\int_{[0,T]}\sum_{i\in S}\int_A h(t,i,a)\eta^{\pi^*}(dt,i,da)
\end{eqnarray}
for any bounded  measurable function $h$ on $X$.  Fix any $j,k\in S$, $t\in[0,T]$, and $h\in B(X)$. Define
$$H(j,t):=E_k^{\pi^*}\left[\int_t^{T}\int_Ah(s,\xi_{s},a)\varphi(da|\xi_{s},s)ds\bigg|\ \xi_t=j\right].$$
Then, by Theorem \ref{thm4.2}, we obtain
\begin{align}\label{5.4}
 & H(j,t)=\int_t^T\sum_{i\in S}\int_Ah(s,i,a)\varphi(da|i,s)P_k^{\pi^*}(\xi_s=i|\ \xi_t=j)ds
  =\int_t^T\int_Ah(s,j,a)\varphi(da|j,s)ds\nonumber\\
  &+\int_t^T\sum_{i\in S}\int_Ah(s,i,a)\varphi(da|i,s)\int_t^s\sum_{i'\in S}\int_Aq(i|i',a)\varphi(da|i',v)P_k^{\pi^*}(\xi_v=i'|\ \xi_t=j)dvds.
\end{align}
Let
\begin{align*}
  \mathcal{L}:=\bigg\{g\in B(S\times [t,T]): \ &\int_t^T\sum_{i\in S}g(i,s)\int_t^s\sum_{i'\in S}\int_Aq(i|i',a)\varphi(da|i',v)P_k^{\pi^*}(\xi_v=i'|\ \xi_t=j)dvds\\
  =&\int_t^T\int_A\sum_{i'\in S}\int_s^T\sum_{i\in S}g(i,v)P_k^{\pi^*}(\xi_v=i|\ \xi_s=i')dvq(i'|j,a)\varphi(da|j,s)ds
  \bigg\}.
\end{align*}
Thus, employing the Kolmogorov forward and backward equations and following the similar arguments of Theorem \ref{thm4.3}, we  have
$\mathcal{L}=B(S\times[t,T])$. Since the function $\int_Ah(s,i,a)\varphi(da|i,s)$ is in $B(S\times[t,T])$, we get
\begin{eqnarray*}
  &&\int_t^T\sum_{i\in S}\int_Ah(s,i,a)\varphi(da|i,s)\int_t^s\sum_{i'\in S}\int_Aq(j|i',a)\varphi(da|i',v)P_k^{\pi^*}(\xi_v=i'|\ \xi_t=j)dvds\\
  &=&
  \int_t^T\int_A\sum_{i'\in S}H(i',s)q(i|j,a)\varphi(da|j,s)ds.
\end{eqnarray*}
Hence, combining the last equality and (\ref{5.4}),  we have
\begin{eqnarray*}
  H(j,t)=\int_t^T\int_Ah(s,j,a)\varphi(da|j,s)ds+\int_t^T\int_A\sum_{i\in S}H(i,s)q(i|j,a)\varphi(da|j,s)ds,
\end{eqnarray*}
which implies
\begin{eqnarray}\label{5.5}
  -\frac{\partial H}{\partial t}(j,t)=\int_Ah(t,j,a)\varphi(da|j,t)+\int_A\sum_{i\in S}H(i,t)q(i|j,a)\varphi(da|j,t) \ \ {\rm a.e.} \ t\in [0,T].
\end{eqnarray}
Let $\widetilde{M}:=\sup_{(t,j,a)\in X}|h(t,j,a)|$. Then   it follows from (\ref{5.5}) and Assumption \ref{ass3.1} that
\begin{eqnarray}\label{5.6}
\left|\frac{\partial H}{\partial t}(j,t)\right|\leq (1+2L)\widetilde{M}w(j) \ {\rm for\  all}\  (j,t)\in S\times [0,T].
\end{eqnarray}
Set $g(i,t):=\widetilde{g}(t)$ for all $(i,t)\in S\times [0,T]$ in (\ref{5.2}), where $\widetilde{g}$ is an arbitrary bounded measurable function on $[0,T]$. Then  we conclude that
the marginal of $\eta$ on $[0,T]$ is the normalized Lebesgue measure. Hence, by
Proposition D.8 in \cite[p.184]{one96},  there exists a kernel $\phi$ on $S$ given $[0,T]$ satisfying $\eta(dt,i,A)=\frac{1}{T}\phi(i,t)dt$. Applying the standard technique, we see that (\ref{5.2}) is also true for any function in $B_{w}(S\times [0,T])$.  Therefore,
direct calculations give
\begin{eqnarray*}
  &&\int_{[0,T]}\sum_{i\in S}\int_A h(t,i,a)\eta(dt,i,da)\\
  &=&\frac{1}{T}\int_{[0,T]}\sum_{i\in S}\int_A h(t,i,a)\varphi(da|i,t)\phi(i,t)dt\\
  &=&-\frac{1}{T}\int_{[0,T]}\sum_{i\in S}\left[\frac{\partial H}{\partial t}(i,t)+\int_A\sum_{i'\in S}H(i',t)q(i'|i,a)\varphi(da|i,t)\right]\phi(i,t)dt\\
  &=&-\int_{[0,T]}\sum_{i\in S}\frac{\partial H}{\partial t}(i,t)\eta(dt,i,A)
  +\int_{[0,T]}\sum_{i\in S}\int_A\sum_{i'\in S}\int_t^T\frac{\partial H}{\partial v}(i',v)dvq(i'|j,a)\eta(dt,i,da)\\
  &=&-\frac{1}{T}\int_{[0,T]}\sum_{i\in S}\frac{\partial H}{\partial t}(i,t) \gamma(i)dt\\
  &=&\frac{1}{T}\sum_{i\in S} H(i,0)\gamma(i),
\end{eqnarray*}
where the second equality follows from (\ref{5.5}), and the fourth one is due to (\ref{5.2}) and (\ref{5.6}).  Moreover, it is obvious that $$\int_{[0,T]}\sum_{i\in S}\int_A h(t,i,a)\eta^{\pi^*}(dt,i,da)=\frac{1}{T}\sum_{i\in S} H(i,0)\gamma(i).$$
Thus, the equality (\ref{5.3}) holds. Hence, we obtain $\eta\in\mathcal{D}$.

(b) The convexity of $\mathcal{D}$ and $\mathcal{D}_0$ follows directly  from part (a) and (\ref{5-0}). Let $\{\eta_n\}\subseteq\mathcal{D}$ be an arbitrary sequence converging to a measure $\eta\in\mathcal{P}_{w^2}(X)$ in the $w^2$-weak topology.
 Then it follows from part (a) that
\begin{eqnarray}\label{5.8}
  &&T\int_0^T\sum_{i\in S}\int_{A}\sum_{j\in S}\int_t^Tg(j,v)dvq(j|i,a)\eta_n(dt,i,da)\nonumber\\
&=&T\int_0^T\sum_{i\in S}g(i,t)\eta_n(dt,i,A)-\int_0^T\sum_{i\in S}g(i,t)\gamma(i)dt
\end{eqnarray}
for each $g\in C_w(S\times[0,T])$. By Assumptions \ref{ass3.1} and \ref{ass3.3}(iii), we have that the function $\sum_{j\in S}\int_t^Tg(j,v)dvq(j|i,a)$ belongs to $C_{w^2}(X)$. Thus, combining (\ref{5.8}) and part (a), we obtain $\eta\in \mathcal{D}$. Hence, $\mathcal{D}$ is closed in the
$w^2$-weak topology.

For each integer $m\geq1$, define $X_m:=\{(t,i,a):t\in[0,T], i\in S_m, a\in A(i)\}$ and  $w_m:=\inf\{w(i): (t,i,a)\notin X_m\}$, where $S_m$ is as in Assumption \ref{ass5.1}. Then it follows from Assumption \ref{ass3.3}(ii) that $\{X_m\}$ is a nondecreasing sequence of compact sets, $X_m\uparrow X$, and $\lim\limits_{m\to\infty}w_m=\infty$.  Thus,  we have
\begin{eqnarray*}
  w_m\int_0^{T}\sum_{i\notin S_m}\int_Aw^2(i)\eta^{\pi}(dt,i, da)\leq\frac{1}{T}\int_0^TE_{\gamma}^{\pi}[w^3(\xi_t)]dt\leq e^{\rho_3T}\gamma(w^3)+\frac{b_3}{\rho_3}(e^{\rho_3 T}-1)
\end{eqnarray*}
for all $\pi\in\Pi$. Hence, for any $\epsilon>0$, the last inequality implies that there exists an integer $m_0>0$ satisfying
\begin{eqnarray}\label{5.9}
  \sup_{\pi\in\Pi}\int_0^{T}\sum_{i\notin S_m}\int_Aw^2(i)\eta^{\pi}(dt,i, da)\leq \epsilon.
\end{eqnarray}
Therefore, by (\ref{5.1}),  (\ref{5.9}),  and Corollary A.46 in \cite[p.424]{schied}, we have that $\mathcal{D}$ is  compact in  the
$w^2$-weak topology. Finally, it follows from the compactness of $\mathcal{D}$, (\ref{5-0}),  Assumptions \ref{ass3.2}(ii) and  \ref{ass3.3}(iii)  that $\mathcal{D}_0$ is also compact
in  the $w^2$-weak topology.
\end{proof}
\begin{rem}\label{rem5.2}
{\rm  (a) Theorem \ref{thm5.1} is \emph{new}, and it gives an equivalent characterization of the occupation measures and establishes the compactness and convexity of the set of all occupation measures, which play a crucial role in reformulating the constrained optimization as a linear program and obtaining its dual program.

(b) From the proof of part (a), we conclude that
 for each occupation measure generated by a randomized history-dependent policy, there exists an occupation measure generated by a randomized Markov policy  equal to it. Hence,
  for any $\pi\in\Pi$, there exists $\widetilde{\pi}\in\Pi^M$ such that $V_n(\pi)=V_n(\widetilde{\pi})$ for all $1\leq n\leq N$.

}
\end{rem}
The constrained optimization problem (\ref{P}) is equivalent to the linear programming formulation below:
\begin{eqnarray}\label{5.10}
  &&\text{minimize} \ \ \ \   T\int_0^T\sum_{i\in S}\int_Ac_0(i,a)\eta(dt,i,da)\ \text{over} \ \eta\in\mathcal{D} \nonumber\\
  &&\text{subject \ to} \ \  T\int_0^T\sum_{i\in S}\int_Ac_n(i,a)\eta(dt,i,da)\leq d_n, \ n=1,2,\ldots, N.
\end{eqnarray}

The following statement establishes the existence of constrained-optimal policies for the case of unbounded transition and cost rates.
\begin{thm}\label{thm5.2}
Suppose that Assumptions \ref{ass3.1}-\ref{ass3.3} and \ref{ass5.1} are satisfied. Then there exists a constrained-optimal policy $\pi^*\in \Pi^M$ for the constrained optimization problem {\rm (\ref{P})}.
\end{thm}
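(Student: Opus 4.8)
The plan is to reduce the constrained optimization problem (\ref{P}) to the linear program (\ref{5.10}) over the set $\mathcal{D}_0$ of feasible occupation measures, and then apply the Weierstrass theorem using the structural results already established in Theorem \ref{thm5.1}. First I would recall, via Remark \ref{rem5.2}(b), that the original problem and the linear program are genuinely equivalent: for every $\pi\in\Pi$ the cost functionals $V_n(\pi)$ admit the representation $V_n(\pi)=T\int_{[0,T]}\sum_{i\in S}\int_A c_n(i,a)\,\eta^{\pi}(dt,i,da)$ for all $0\le n\le N$, which follows directly from the definition of the occupation measure $\eta^{\pi}$. Consequently the feasible set $U$ corresponds exactly to the set $\mathcal{D}_0$ defined in (\ref{5-0}), and minimizing $V_0(\pi)$ over $U$ is the same as minimizing the linear functional $\eta\mapsto T\int_{[0,T]}\sum_{i\in S}\int_A c_0(i,a)\,\eta(dt,i,da)$ over $\mathcal{D}_0$.

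Next I would verify the two hypotheses of the Weierstrass theorem. By Theorem \ref{thm5.1}(b), the set $\mathcal{D}_0$ is compact (and convex) in the $w^2$-weak topology, and since $X$ is metrizable, $\mathcal{P}_{w^2}(X)$ is metrizable, so compactness is sequential compactness. For the objective functional, Assumption \ref{ass3.2}(ii) gives $|c_0(i,a)|\le Mw(i)\le Mw^2(i)$, and Assumption \ref{ass3.3}(iii) gives continuity of $c_0(i,\cdot)$; hence $c_0\in C_{w^2}(X)$, so the map $\eta\mapsto T\int_{[0,T]}\sum_{i\in S}\int_A c_0(i,a)\,\eta(dt,i,da)$ is continuous in the $w^2$-weak topology by the very definition of that topology. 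A lower semicontinuous (indeed continuous) functional on a nonempty compact set attains its infimum, so there exists $\eta^*\in\mathcal{D}_0$ achieving the minimum. Here I would also note that $\mathcal{D}_0$ is nonempty provided the constraint set is feasible; if necessary this is part of the standing assumption that the problem is well posed.

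Finally I would convert the optimal occupation measure back into an optimal policy. Since $\eta^*\in\mathcal{D}_0\subseteq\mathcal{D}$, the equivalent characterization in Theorem \ref{thm5.1}(a) applies, and its proof shows more: the disintegration $\eta^*(dt,i,da)=\eta^*(dt,i,A)\varphi(da|i,t)$ via Proposition D.8 in \cite{one96} produces a kernel $\varphi$ concentrated on $A(i)$, and the randomized Markov policy $\pi^*(\cdot|\omega,t):=\varphi(\cdot|\xi_{t-}(\omega),t)$ satisfies $\eta^{\pi^*}=\eta^*$. Therefore $\pi^*\in\Pi^M$ and $V_n(\pi^*)=T\int_{[0,T]}\sum_{i\in S}\int_A c_n(i,a)\,\eta^*(dt,i,da)$ for every $n$, so $\pi^*\in U$ and $V_0(\pi^*)=\inf_{\eta\in\mathcal{D}_0}(\cdots)=\inf_{\pi\in U}V_0(\pi)$, establishing constrained optimality.

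The genuinely hard work has already been discharged in Theorem \ref{thm5.1}: the compactness in the $w^2$-weak topology (requiring the tightness estimate (\ref{5.9}) together with Assumption \ref{ass5.1}), the closedness of $\mathcal{D}_0$, and the nontrivial direction of the characterization that turns an abstract measure satisfying (\ref{5.2}) into the occupation measure of a randomized Markov policy. Given those, the remaining argument is a routine application of the Weierstrass theorem; the only point demanding minor care is confirming that the constraint-defining functionals $\eta\mapsto T\int c_n\,d\eta$ are $w^2$-weakly continuous (so that $\mathcal{D}_0$ is indeed closed, hence compact, as a sublevel set), which again follows from $c_n\in C_{w^2}(X)$ under Assumptions \ref{ass3.2}(ii) and \ref{ass3.3}(iii).
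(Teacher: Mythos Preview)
Your proposal is correct and follows essentially the same approach as the paper: reduce the constrained problem to the linear program (\ref{5.10}) over occupation measures, use Theorem~\ref{thm5.1}(b) for compactness of $\mathcal{D}_0$, Assumptions~\ref{ass3.2}(ii) and~\ref{ass3.3}(iii) for $w^2$-weak continuity of the cost functionals, apply the Weierstrass theorem, and then recover a randomized Markov policy from the optimal measure via the construction in the proof of Theorem~\ref{thm5.1}(a). The paper's proof is just a two-line summary of precisely these steps.
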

\begin{proof}
It follows from the proof of part (a) in Theorem \ref{thm5.1}  that $\mathcal{D}=\{\eta^{\pi}: \pi\in\Pi^M\}$. Hence, the desired  assertion follows from (\ref{5.10}), Assumptions \ref{ass3.2}(ii), \ref{ass3.3}(iii), Theorem \ref{thm5.1},  and the Weierstrass theorem in \cite[p.40]{al}.
\end{proof}
Below we will develop the dual program of the linear program (\ref{5.10}), and provide the strong duality conditions. To this end, we introduce the following notation.

Let $w$ be as in Assumption \ref{ass3.1}, and $Y:=[0,T]\times S$ is endowed with Borel $\sigma$-algebra $\mathcal{B}(Y)$. We denote by $\mathbb{R}$ the space of all real numbers (i.e., $\mathbb{R}:=(-\infty,\infty)$), by $\mathcal{M}_{w^3}(X)$ the space of all signed measures $\eta$ on $\mathcal{B}(X)$ with $\int_{[0,T]}\sum_{i\in S}\int_Aw^3(i)|\eta|(dt,i,da)<\infty$, and by
$\mathcal{M}^+_{w^3}(X):=\{\eta\in\mathcal{M}_{w^3}(X):\eta\geq0\}$, where $|\eta|:=\eta^++\eta^-$ denotes the total variation of $\eta$.  $\mathcal{M}_{w^2}(Y)$ and $\mathcal{M}^+_{w^2}(Y)$ are defined similarly. Moreover, let
\begin{eqnarray*}
  &&\mathcal{X}:=\mathcal{M}_{w^3}(X)\times \mathbb{R}^N, \ \ \mathcal{Y}:=B_{w^3}(X)\times \mathbb{R}^N,\\
  &&\mathcal{Z}:=\mathcal{M}_{w^2}(Y)\times\mathbb{R}^N, \ \ \mathcal{U}:=B_{w^2}(Y)\times \mathbb{R}^N.
\end{eqnarray*}
Define a bilinear map $\langle\rangle_1$ on the dual pair of $(\mathcal{X},\mathcal{Y})$ by
\begin{eqnarray}\label{5.11}
\langle(\eta, x_1,\ldots,x_N), (g, y_1,\ldots,y_N)\rangle_1:=\int_{[0,T]}\sum_{i\in S}\int_A g(t,i,a)\eta(dt,i,da)+\sum_{n=1}^{N}x_ny_n
\end{eqnarray}
for all $(\eta,x_1,\ldots,x_N)\in \mathcal{X}$ and $(g,y_1,\ldots, y_N)\in\mathcal{Y}$, and another bilinear  $\langle\rangle_2$ on the dual pair of $(\mathcal{Z},\mathcal{U})$ by
\begin{eqnarray}\label{5.12}
\langle(\nu, z_1,\ldots,z_N), (h, u_1,\ldots,u_N)\rangle_2:=\int_{[0,T]}\sum_{i\in S}h(t,i)\nu(dt,i)+\sum_{n=1}^{N}z_nu_n
\end{eqnarray}
for all $(\nu,z_1,\ldots,z_N)\in \mathcal{Z}$ and $(h,u_1,\ldots, u_N)\in\mathcal{U}$. Moreover, two operators  $\Gamma$ from $\mathcal{X}$ to $\mathcal{Z}$, and $\Gamma^*$ from   $\mathcal{U}$ to $\mathcal{Y}$ are defined as follows:
\begin{eqnarray*}
  &&\Gamma (\eta, x_1,\ldots,x_N):=\bigg(\eta(ds,j,A)-\int_{[0,T]}\sum_{i\in S}\int_AI_{[t,T]}(s)q(j|i,a)\eta(dt,i,da)ds,\nonumber\\
  &&T\int_{[0,T]}\sum_{i\in S}\int_Ac_1(i,a)\eta(dt,i,da)+x_1,\ldots,T\int_{[0,T]}\sum_{i\in S}\int_Ac_N(i,a)\eta(dt,i,da)+x_N \bigg),\nonumber\\
\end{eqnarray*}
and
\begin{eqnarray*}
  \Gamma^*(h,u_1,\ldots,u_N):=\!\bigg(h(t,i)-\int_t^T\sum_{j\in S} h(s,j)q(j|i,a)ds+T\sum_{n=1}^{N}u_nc_n(i,a), u_1,\ldots, u_N\bigg).
\end{eqnarray*}

Then  we have the following lemma on the properties of $\Gamma$ and $\Gamma^*$.
\begin{lem}\label{lem5.1}
  Under Assumptions \ref{ass3.1}-\ref{ass3.3}, and \ref{ass5.1}, the following statements hold.
  \begin{itemize}
    \item[{\rm (a)}] $\Gamma\mathcal{X}\subseteq \mathcal{Z}$ and $\Gamma^*(\mathcal{U})\subseteq\mathcal{Y}$.
    \item[{\rm (b)}] $\Gamma^*$ is the adjoint of  $\Gamma$.
    \item[{\rm (c)}] $\Gamma$ is $\tau(\mathcal{X},\mathcal{Y})-\tau(\mathcal{Z},\mathcal{U})$ continuous, where $\tau(\mathcal{X},\mathcal{Y})$ denotes the coarsest topology on $\mathcal{X}$ such that $\langle\cdot, y\rangle_1$ is continuous on $\mathcal{X}$ for each $y\in\mathcal{Y}$, and $\tau(\mathcal{Z},\mathcal{U})$ is defined similarly.
  \end{itemize}
\end{lem}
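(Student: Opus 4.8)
The lemma asserts three properties of the operators $\Gamma$ and $\Gamma^*$: that they map into the correct spaces (part (a)), that $\Gamma^*$ is the adjoint of $\Gamma$ relative to the bilinear pairings $\langle\rangle_1,\langle\rangle_2$ (part (b)), and that $\Gamma$ is continuous with respect to the weak topologies generated by these pairings (part (c)). My plan is to verify each in turn, with part (a) serving as the technical foundation for the other two.

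For part (a), the goal is to show $\Gamma(\eta,x_1,\ldots,x_N)\in\mathcal{Z}=\mathcal{M}_{w^2}(Y)\times\mathbb{R}^N$ whenever $(\eta,x_1,\ldots,x_N)\in\mathcal{X}=\mathcal{M}_{w^3}(X)\times\mathbb{R}^N$. The scalar components are trivially in $\mathbb{R}$, so the content is the first (measure-valued) component. I would estimate its $w^2$-weighted total variation: the term $\eta(ds,j,A)$ is controlled by $\int w^2\,d|\eta|\le\int w^3\,d|\eta|<\infty$, and for the integral term I would pull the $w^2$-weight inside and use Assumption \ref{ass3.1}(ii) together with Assumption \ref{ass3.3}(i)—specifically the bound $\sum_j w^2(j)|q(j|i,a)|\le(\rho_2+b_2+2L)w^3(i)$ of the kind already exploited in the proof of Theorem \ref{thm4.3}—to dominate everything by a constant times $\int w^3\,d|\eta|$. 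The analogous $w^3\to w^2$ bookkeeping, run in the other direction, handles $\Gamma^*(\mathcal{U})\subseteq\mathcal{Y}$: here Assumption \ref{ass3.3}(iii) gives the measurability/continuity of $\sum_j h(s,j)q(j|i,a)$ and Assumption \ref{ass3.2}(ii) controls the $c_n$ terms.

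Parts (b) and (c) then follow cleanly. For part (b), I would simply compute both $\langle\Gamma(\eta,\vec x),(h,\vec u)\rangle_2$ and $\langle(\eta,\vec x),\Gamma^*(h,\vec u)\rangle_1$ using definitions \eqref{5.11} and \eqref{5.12}, and check they agree. The only nontrivial point is interchanging the order of integration in the cross term
$$
\int_{[0,T]}\sum_{i\in S}\int_A\Bigl(\int_t^T\sum_{j\in S}h(s,j)q(j|i,a)\,ds\Bigr)\eta(dt,i,da),
$$
which must be matched against $\int_{[0,T]}\sum_{j\in S}h(s,j)\bigl(\int\int I_{[t,T]}(s)q(j|i,a)\eta(dt,i,da)\bigr)ds$; the Fubini–Tonelli interchange is licensed precisely by the finiteness bounds established in part (a). Part (c) is essentially immediate from (b): continuity of $\Gamma$ from $\tau(\mathcal{X},\mathcal{Y})$ to $\tau(\mathcal{Z},\mathcal{U})$ means that $(\eta,\vec x)\mapsto\langle\Gamma(\eta,\vec x),(h,\vec u)\rangle_2$ is $\tau(\mathcal{X},\mathcal{Y})$-continuous for every $(h,\vec u)\in\mathcal{U}$, and by the adjoint relation this map equals $\langle(\eta,\vec x),\Gamma^*(h,\vec u)\rangle_1$, which is continuous by the very definition of the weak topology $\tau(\mathcal{X},\mathcal{Y})$—provided $\Gamma^*(h,\vec u)\in\mathcal{Y}$, which is exactly what part (a) guarantees.

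The main obstacle is the weighted-norm estimate in part (a): one must be careful that the $q(j|i,a)$-integral against $\int_t^T g(j,v)\,dv$ (for $\Gamma^*$) or against the measure (for $\Gamma$) genuinely lands one weight-power lower, and this hinges on combining the two moment bounds of Assumption \ref{ass3.3}(i) with $q^*(i)\le Lw(i)$ to absorb the diagonal term $q(i|i,a)$. Once this single inequality is in hand, the adjoint identity and the continuity are formal consequences, so I expect the proof to be short modulo that estimate.
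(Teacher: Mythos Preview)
Your proposal is correct and follows essentially the same route as the paper: part (a) via the weighted estimate $\sum_j w^2(j)|q(j|i,a)|\le(\rho_2+b_2+2L)w^3(i)$ together with Assumption \ref{ass3.2}(ii), part (b) by direct computation and Fubini, and part (c) from the adjoint identity plus $\Gamma^*(\mathcal{U})\subseteq\mathcal{Y}$. The only cosmetic difference is that the paper cites a general proposition (Proposition 12.2.5 in \cite{one99}) for part (c) rather than spelling out the one-line argument you give.
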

\begin{proof}
  (a) For each $(\eta, x_1,\ldots,x_N)\in\mathcal{X}$, it follows from Assumption \ref{ass3.2}(ii) that
  $$\int_{[0,T]}\sum_{i\in S}\int_A|c_n(i,a)||\eta|(dt,i,da)\leq M\int_{[0,T]}\sum_{i\in S}\int_Aw(i)|\eta|(dt,i,da)<\infty$$
  for all $1\leq n\leq N$. Moreover, direct calculations yield
  \begin{eqnarray*}
    &&\int_{[0,T]}\sum_{i\in S} w^2(i)|\eta|(dt,i,A)+\int_{[0,T]}\sum_{j\in S}w^2(j)\int_{[0,T]}\sum_{i\in S}\int_AI_{[t,T]}(s)ds|q(j|i,a)||\eta|(dt,i,da)\\
    &\leq&[1+T(\rho_2+b_2+2L)]\int_{[0,T]}\sum_{i\in S} w^3(i)|\eta|(dt,i,A)<\infty.
  \end{eqnarray*}
  Hence, we obtain $\Gamma\mathcal{X}\subseteq \mathcal{Z}$.

For each $(h,u_1,\ldots,u_N)\in \mathcal{U}$, set $\overline{L}:=\sup_{(t,i)\in Y}\frac{|h(t,i)|}{w^2(i)}$. Then, by Assumptions \ref{ass3.1}(ii),  \ref{ass3.2}(ii), and \ref{ass3.3}(i),  we have
\begin{eqnarray*}
  &&\sup_{(t,i,a)\in X}\frac{\left|h(t,i)-\int_t^T\sum_{j\in S} h(s,j)q(j|i,a)ds+\sum_{n=1}^{N}u_nc_n(i,a)\right|}{w^3(i)}\\
  &\leq&\overline{L}[1+T(\rho_2+b_2+2L)]+M\sum_{n=1}^{N}|u_n|,
\end{eqnarray*}
which implies $\Gamma^*(\mathcal{U})\subseteq\mathcal{Y}$.

(b) For each $(\eta, x_1,\ldots,x_N)\in\mathcal{X}$ and $(h,u_1,\ldots,u_N)\in \mathcal{U}$, it follows from (\ref{5.11}), (\ref{5.12}), and the definitions of $\Gamma$ and $\Gamma^*$ that
\begin{eqnarray*}
  &&\langle\Gamma(\eta, x_1,\ldots,x_N), (h,u_1,\ldots,u_N)\rangle_2\\
  &=&\int_{[0,T]}\sum_{i\in S}h(t,i)\eta(dt,i,A)-\int_{[0,T]}\sum_{j\in S}h(s,j)\int_{[0,T]}\sum_{i\in S}\int_AI_{[t,T]}(s)dsq(j|i,a)\eta(dt,i,da)\\
  &&+\sum_{n=1}^{N}\left[T\int_{[0,T]}\sum_{i\in S}\int_Ac_n(i,a)\eta(dt,i,da)+x_n\right]u_n\\
  &=&\int_{[0,T]}\sum_{i\in S}h(t,i)\eta(dt,i,A)-\int_{[0,T]}\sum_{i\in S}\int_A\int_t^T\sum_{j\in S}h(s,j)q(j|i,a)ds\eta(dt,i,da)\\
  &&+\sum_{n=1}^{N}Tu_n\int_{[0,T]}\sum_{i\in S}\int_Ac_n(i,a)\eta(dt,i,da)+\sum_{n=1}^{N}x_nu_n\\
  &=&\langle(\eta, x_1,\ldots,x_N), \Gamma^*(h,u_1,\ldots,u_N)\rangle_1.
\end{eqnarray*}
Therefore, we get the desired assertion.

(c) Part (c) follows from part (a) and Proposition 12.2.5 in \cite[p.208]{one99}.
\end{proof}
According to Lemma \ref{lem5.1} and Chapter 12 in \cite{one99}, the constrained problem (\ref{5.10}) can be rewritten as
\begin{eqnarray}\label{5.13}
 \textbf{P}: &&\text{minimize} \ \ \ \langle(\eta, x_1,\ldots, x_N), (Tc_0,0,\ldots,0)\rangle_1\nonumber\\
  &&\text{subject\ to} \ \ \Gamma(\eta, x_1,\ldots, x_N)=(\frac{1}{T}\gamma dt, d_1,\ldots, d_N )\nonumber\\
  && \ \ \ \ \ \ \ \ \ \ \ \ \ \  \ \eta\in\mathcal{M}^+_{w^3}(X), \ x_1\geq0, \ldots, x_N\geq0.
\end{eqnarray}
The corresponding dual problem of \textbf{P} is
\begin{eqnarray}\label{5.14}
\textbf{P}^*: &&\text{maximize} \ \ \ \langle(\frac{1}{T}\gamma dt, d_1,\ldots, d_N), (h,u_1,\ldots,u_N)\rangle_2\nonumber\\
&&\text{subject\ to} \ \ Tc_0(i,a)-h(t,i)+\int_t^T\sum_{j\in S}h(s,j)q(j|i,a)ds-T\sum_{n=1}^{N}u_nc_n(i,a)\geq0\nonumber\\
&&\ \ \ \ \ \ \ \ \ \ \ \ \ \  \ \text{for \ all} \ (t,i,a)\in X, \ h\in B_{w^2}(Y), \  u_1\leq0,\ldots, u_N\leq0.
\end{eqnarray}

We denote the values of problems (\ref{5.13}) and (\ref{5.14}) by $\inf(\textbf{P})$ and $\sup(\textbf{P}^*)$, respectively.

In order to establish the strong duality between the primal linear program (\ref{5.13}) and its dual program (\ref{5.14}), we impose the following Slater condition which is commonly used in the constrained optimization problems; see, for instance, \cite{guo13,guo12,guo11,pi11}.
\begin{ass}\label{ass5.2}
There exists a policy $\widetilde{\pi}\in\Pi$ such that $V_n(\widetilde{\pi})<d_n$ for all $1\leq n\leq N$.
\end{ass}
Now we present the strong duality theorem for finite-horizon criteria below.
\begin{thm}\label{thm5.3}
Suppose that Assumptions \ref{ass3.1}-\ref{ass3.3}, \ref{ass5.1}, and \ref{ass5.2} hold. Then problems {\rm(\ref{5.13})} and {\rm(\ref{5.14})} admit optimal solutions, and $\inf(\textbf{P})=\sup(\textbf{P}^*)$.
\end{thm}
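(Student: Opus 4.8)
The plan is to read Theorem \ref{thm5.3} as an instance of the abstract infinite-dimensional linear programming duality of \cite{one99}, Chapter 12, applied to the dual pairs $(\mathcal{X},\mathcal{Y})$ and $(\mathcal{Z},\mathcal{U})$, the bilinear forms $\langle\cdot,\cdot\rangle_1$, $\langle\cdot,\cdot\rangle_2$, and the operators $\Gamma$, $\Gamma^*$ introduced above. Lemma \ref{lem5.1} has already verified the three structural prerequisites of that theory: $\Gamma\mathcal{X}\subseteq\mathcal{Z}$ and $\Gamma^*(\mathcal{U})\subseteq\mathcal{Y}$, so that $\textbf{P}$ and $\textbf{P}^*$ are well posed; that $\Gamma^*$ is the adjoint of $\Gamma$; and that $\Gamma$ is $\tau(\mathcal{X},\mathcal{Y})$-$\tau(\mathcal{Z},\mathcal{U})$ continuous. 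It therefore remains to check the feasibility, finiteness, and constraint-qualification hypotheses and then to quote the strong duality theorem, which yields the three conclusions simultaneously: solvability of $\textbf{P}$, solvability of $\textbf{P}^*$, and $\inf(\textbf{P})=\sup(\textbf{P}^*)$.

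First I would dispose of the primal side and of weak duality. A pair $(\eta,x_1,\ldots,x_N)\in\mathcal{M}^+_{w^3}(X)\times\mathbb{R}^N_+$ is feasible for $\textbf{P}$ precisely when the first component of the equality constraint identifies $\eta$ as an element of $\mathcal{D}$ via the characterization (\ref{5.2}) of Theorem \ref{thm5.1}(a), and the remaining components read $x_n=d_n-T\int_{[0,T]}\sum_{i\in S}\int_A c_n(i,a)\eta(dt,i,da)$; since $x_n\geq0$, this is exactly the condition $\eta\in\mathcal{D}_0$ of (\ref{5-0}), with the $x_n$ playing the role of slack variables. Consequently the value of $\textbf{P}$ coincides with that of (\ref{5.10}), and the constrained-optimal policy furnished by Theorem \ref{thm5.2} produces an optimal occupation measure whose associated $(\eta,x)$ solves $\textbf{P}$; finiteness of $\inf(\textbf{P})$ follows from Lemma \ref{lem3.1}(d). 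Weak duality, $\inf(\textbf{P})\geq\sup(\textbf{P}^*)$, is then the routine computation: for any primal-feasible $(\eta,x)$ and dual-feasible $(h,u_1,\ldots,u_N)$ one expands $\langle\Gamma(\eta,x),(h,u)\rangle_2=\langle(\eta,x),\Gamma^*(h,u)\rangle_1$ using Lemma \ref{lem5.1}(b) and compares the two objectives using $\eta\geq0$, $x_n\geq0$, $u_n\leq0$, and the sign of the dual slack in (\ref{5.14}).

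The substantive step is the absence of a duality gap together with solvability of $\textbf{P}^*$, and here I would invoke the strong duality theorem of \cite{one99} for the consistent program $\textbf{P}$. Its hypotheses are the weak continuity of $\Gamma$ (Lemma \ref{lem5.1}(c)), consistency and finiteness of $\inf(\textbf{P})$ (established in the previous paragraph), and a constraint qualification, for which the Slater condition, Assumption \ref{ass5.2}, is designed: the policy $\widetilde{\pi}$ with $V_n(\widetilde{\pi})<d_n$ yields a strictly feasible primal point, placing the right-hand side $(\frac{1}{T}\gamma\,dt,d_1,\ldots,d_N)$ suitably interior to the image of the constraint cone. The main obstacle I anticipate is the verification of this interior/closedness requirement in the precise topological form demanded by the cited theorem: one must show that the cone $\{\,\Gamma(\eta,x):\eta\in\mathcal{M}^+_{w^3}(X),\,x\in\mathbb{R}^N_+\,\}$, intersected appropriately with the objective level, is weakly closed, and that the Slater point lies interior to it. This is exactly where the compactness of $\mathcal{D}$ and $\mathcal{D}_0$ in the $w^2$-weak topology from Theorem \ref{thm5.1}(b) is used, since it converts the required cone condition into a closedness statement that the Slater interiority can then exploit. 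Once these hypotheses are confirmed, the theorem delivers simultaneously $\inf(\textbf{P})=\sup(\textbf{P}^*)$ and the attainment of the dual supremum, which completes the proof.
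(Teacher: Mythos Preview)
Your plan and the paper's proof diverge at the ``substantive step.'' You propose to obtain strong duality and dual attainment by quoting an abstract strong duality theorem from \cite{one99}, Chapter~12, once a closedness/interiority condition on the image cone is verified; you correctly flag this verification as the main obstacle and gesture toward the $w^2$-weak compactness of $\mathcal{D}$ and $\mathcal{D}_0$ from Theorem~\ref{thm5.1}(b). The paper does \emph{not} proceed this way: it never checks that cone condition. Instead it argues constructively. Using the convexity of $\mathcal{D}$ and the Slater condition, it invokes Lagrangian duality (Rockafellar~\cite{rock}) to produce multipliers $u_n^*\geq0$ at which the Lagrangian infimum over $\mathcal{D}$ equals $\inf(\textbf{P})$. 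It then applies Theorem~\ref{thm4.4} to the modified running cost $T\big(c_0+\sum_n u_n^*c_n\big)-\sum_n u_n^*d_n$, obtaining a value function $h^*\in B_w(Y)$ solving the corresponding optimality equation (\ref{4.6}); setting $\widetilde{h}(t,i):=\sum_n u_n^*d_n-\partial h^*/\partial t(t,i)$ yields a point $(\widetilde{h},-u_1^*,\ldots,-u_N^*)$ that is checked to be dual-feasible and to achieve objective value $\inf(\textbf{P})$. Weak duality (Theorem~12.2.9 in \cite{one99}) then forces equality and optimality.

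The gap in your approach is real, not merely cosmetic. The compactness in Theorem~\ref{thm5.1}(b) lives in the $w^2$-weak topology on $\mathcal{P}_{w^2}(X)$, whereas your dual pair is $(\mathcal{M}_{w^3}(X)\times\mathbb{R}^N,\,B_{w^3}(X)\times\mathbb{R}^N)$ with the $\tau(\mathcal{X},\mathcal{Y})$ topology; translating compactness of $\mathcal{D}$ into closedness of the relevant cone $\{(\Gamma(\eta,x),\,\langle(\eta,x),(Tc_0,0)\rangle_1):(\eta,x)\in\mathcal{M}^+_{w^3}(X)\times\mathbb{R}^N_+\}$ in the right weak topology is not automatic, and you have not carried it out. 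The paper's constructive route sidesteps this entirely by exploiting the dynamic-programming structure (Theorem~\ref{thm4.4}) to manufacture the dual optimizer directly, which is what the Slater-plus-compactness machinery would otherwise have to supply abstractly.
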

\begin{proof}
By Theorem \ref{thm5.2}, we obtain that the problem {\rm(\ref{5.13})} admits an optimal solution. Below we will show the existence of optimal solutions for  the problem {\rm(\ref{5.14})}.
It follows from Theorem \ref{thm5.1}  that $\mathcal{D}$ is convex. By
  Assumption \ref{ass5.2}, Theorem 17 and Example 1 in \cite[p.7, 18, 23]{rock},   there exist constants  $u^*_n\geq0$ $(1\leq n\leq N)$ such that
  \begin{align}\label{5.15}
    \inf(\textbf{P})=&\sup_{u_n\geq 0, 1\leq n\leq N}\inf_{\eta\in\mathcal{D}}\left\{T\int_{[0,T]}\sum_{i\in S}\int_A\left(c_0(i,a)+\sum_{n=1}^{N}u_nc_n(i,a)\right)\eta(dt,i,da)-\sum_{n=1}^{N}u_nd_n\right\}\nonumber\\
    =&\inf_{\eta\in\mathcal{D}}\left\{T\int_{[0,T]}\sum_{i\in S}\int_A\left(c_0(i,a)+\sum_{n=1}^{N}u^*_nc_n(i,a)\right)\eta(dt,i,da)-\sum_{n=1}^{N}u^*_nd_n\right\}.
  \end{align}
Moreover, when $c_0(i,a)$ in (\ref{4.6})  is replaced by $T(c_0(i,a)+\sum_{n=1}^{N}u^*_nc_n(i,a))-\sum_{n=1}^{N}u^*_nd_n$, following the similar arguments of Theorem \ref{thm4.4}, we conclude that there exists a function $h^*\in B_{w}(Y)$ such that for each $i\in S$ and $t\in[0,T]$,
\begin{eqnarray*}
h^*(t,i)=\int_t^T\inf_{a\in A(i)}\left\{Tc_0(i,a)+T\sum_{n=1}^{N}u^*_nc_n(i,a)-\sum_{n=1}^{N}u^*_nd_n+\sum_{j\in S}h^*(s,j)q(j|i,a)\right\}ds,
\end{eqnarray*}
which implies
\begin{align}\label{5.16}
  -\frac{\partial h^*}{\partial t}(t,i)=\inf_{a\in A(i)}\left\{Tc_0(i,a)+T\sum_{n=1}^{N}u^*_nc_n(i,a)-\sum_{n=1}^{N}u^*_nd_n+\sum_{j\in S}h^*(t,j)q(j|i,a)\right\}.
\end{align}
 Let $\widetilde{h}(t,i):=\sum_{n=1}^{N}u_n^*d_n-\frac{\partial h^*}{\partial t}(t,i)$ for all $(t,i)\in Y$. Then, by (\ref{5.16}), we have
 \begin{eqnarray*}
   Tc_0(i,a)-T\sum_{n=1}^{N}(-u^*_n)c_n(i,a)-\widetilde{h}(t,i)
   +\int_t^T\sum_{j\in S}\widetilde{h}(s,j)q(j|i,a)ds\geq0
 \end{eqnarray*}
 for all $(t,i,a)\in X$. As in the proof of Theorem \ref{thm4.4}, we obtain that $\widetilde{h}\in B_{w^2}(Y)$, and
 \begin{align}\label{5.17}
 \sum_{i\in S}h^*(0,i)\gamma(i)=&\inf_{\pi\in\Pi}E_{\gamma}^{\pi}\left[\int_0^T\int_A\left(Tc_0(\xi_t,a)
 +T\sum_{n=1}^{N}u_n^*c_n(\xi_t,a)-\sum_{n=1}^{N}u^*_nd_n\right)\pi(da|\omega,t)dt\right]\nonumber\\
 =&T\inf(\textbf{P}),
 \end{align}
 where the second equality is due to (\ref{5.15}).
 Hence, $(\widetilde{h},-u_1^*, \ldots, -u_n^*)$ is feasible for $\textbf{P}^*$.
 Direct calculations give
 \begin{eqnarray}\label{5.18}
  \sup(\textbf{P}^*)&\geq&\langle(\frac{1}{T}\gamma dt, d_1,\ldots, d_N), (\widetilde{h},-u^*_1,\ldots,-u^*_N)\rangle_2\nonumber\\
  &=&\frac{1}{T}\int_0^T\sum_{i\in S}\widetilde{h}(t,i)\gamma (i)dt-\sum_{n=1}^{N}u_n^*d_n\nonumber\\
   &=&-\frac{1}{T}\int_0^T\sum_{i\in S}\frac{\partial h^*}{\partial t}(t,i)\gamma(i)dt\nonumber\\
   &=&\frac{1}{T}\sum_{i\in S}h^*(0,i)\gamma(i)=\inf(\textbf{P}).
   \end{eqnarray}
   where the last equality follows from (\ref{5.17}). Moreover, by Theorem 12.2.9 in \cite[p.213]{one99} and Theorem \ref{thm5.2}, we have $\sup(\textbf{P}^*)\leq\inf(\textbf{P})$. Hence, this inequality and (\ref{5.18}) gives $\inf(\textbf{P})=\sup(\textbf{P}^*)$, which implies that
  $(\widetilde{h},-u_1^*, \ldots, -u_n^*)$ is an optimal solution for $\textbf{P}^*$.
\end{proof}
\section{An Example}
In this section, we will show the applications of finite-horizon expected total cost criteria with
a controlled birth and death system,  which has been used to illustrate the existence of average constrained-optimal policies in \cite{guo12}.
\begin{exm}\label{exm6.1}
{\rm  (A controlled birth and death system in \cite{guo12}).  The control model is given as follows: $S:=\{0,1,2,\ldots\}$, $A(0):=[-\lambda,\lambda]\times\{0\}$, $A(i):=[-\lambda,\lambda]\times [-\mu,\mu]$ for all $i\geq1$, $q(1|0,(a_1,0))=-q(0|0,(a_1,0)):=\lambda+a_1$ for all $a_1\in [-\lambda,\lambda]$, and for each $i\geq1$,  $a=(a_1,a_2)\in A(i)$,
\begin{eqnarray*}
q(j|i,a):=\left\{\begin{array}{ll}
\lambda i+a_1, &\textrm{if $j=i+1$},\\
-(\mu+\lambda)i-a_1-a_2, &\textrm{if $j=i$},\\
\mu i+a_2, &\textrm{if $j=i-1$},\\
0, &\text{otherwise},
\end{array}\right.
\end{eqnarray*}
where positive constants $\lambda$ and $\mu$ denote the birth and death rates, respectively.

To ensure the existence of optimal policies for the unconstrained and constrained cases, we consider the following conditions.
\begin{itemize}
\item[(C1)] There exists a positive constant $M$ such that $|c_n(i,a)|\leq M(i+1)$ for all $(i,a)\in K$ and $0\leq n\leq N$.
\item[(C2)] For each fixed $i\in S$ and $n\in\{0,1,\ldots,N\}$, $c_n(i,a)$ is continuous in $a\in A(i)$.
\item[(C3)] There exists  a kernel $\widehat{\varphi}$ on $A$ given $S\times [0,T]$ such that $\int_{A(i)}c_n(i,a)\widehat{\varphi}(da|i,t)<d_n$ for all $(i,t)\in S\times [0,T]$ and $1\leq n\leq N$.
\item[(C4)] The initial distribution $\gamma$ on $S$ satisfies $\sum_{i\in S}i^3\gamma(i)<\infty$.
\end{itemize}
}
\end{exm}
\begin{prop}\label{prop6.1}
 Under conditions {\rm(C1)} and {\rm(C2)}, the controlled birth and death system above satisfies Assumptions \ref{ass3.1}, \ref{ass3.2}(ii), and \ref{ass3.3}. Hence, (by Theorem \ref{thm4.4}), there exists a finite-horizon optimal policy. Moreover, under conditions {\rm(C1)}-{\rm(C4)}, the controlled birth and death system satisfies Assumptions \ref{ass3.1}-\ref{ass3.3}, \ref{ass5.1}, \ref{ass5.2}. Hence, (by Theorem \ref{thm5.3}), there exists a constrained-optimal policy and the strong duality holds.
\end{prop}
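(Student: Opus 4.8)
The plan is to reduce Proposition~\ref{prop6.1} to a direct verification of the hypotheses of Theorems~\ref{thm4.4} and \ref{thm5.3} for the specific birth--death data; the entire argument is driven by the single choice of weight function $w(i):=i+1$, $i\in S$. Because each feasible set $A(i)$ is a compact rectangle and every rate is affine in $a=(a_1,a_2)$, each requirement reduces to an explicit polynomial inequality in $i$ that is uniform in $a$.

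The only genuine computations are the Lyapunov-type drift bounds, namely Assumption~\ref{ass3.1}(i) and Assumption~\ref{ass3.3}(i). Here I would use the identity $q(i|i,a)=-(\lambda i+a_1)-(\mu i+a_2)$ for $i\ge1$ to rewrite, for each power $p\in\{1,2,3\}$,
\[
\sum_{j\in S}w^p(j)q(j|i,a)=(\lambda i+a_1)\bigl[w^p(i+1)-w^p(i)\bigr]+(\mu i+a_2)\bigl[w^p(i-1)-w^p(i)\bigr].
\]
The two bracketed factors are polynomials in $i$ of degree $p-1$, so the right-hand side is of degree $p$: the naive degree-$(p+1)$ growth of the individual transition terms cancels through the diagonal rate. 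Since $\mu i+a_2\ge0$ and $w^p(i-1)-w^p(i)\le0$ for $i\ge1$, the second summand is nonpositive and may be discarded, while $\lambda i+a_1\le\lambda(i+1)=\lambda w(i)$; this yields $\sum_{j}w^p(j)q(j|i,a)\le\rho_pw^p(i)+b_p$ for suitable constants. Taking $p=1$ gives Assumption~\ref{ass3.1}(i), and $p=2,3$ give Assumption~\ref{ass3.3}(i). The boundary state $i=0$, where $A(0)=[-\lambda,\lambda]\times\{0\}$ is degenerate, is checked separately and only enlarges the additive constants $b_p$. Assumption~\ref{ass3.1}(ii) is immediate from $q^*(i)=\sup_a|(\mu+\lambda)i+a_1+a_2|\le(\lambda+\mu)w(i)$.

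The remaining items require no real work. Assumption~\ref{ass3.2}(ii) is exactly (C1) read as $|c_n(i,a)|\le Mw(i)$; Assumption~\ref{ass3.3}(ii) holds since each $A(i)$ is a compact rectangle; and Assumption~\ref{ass3.3}(iii) holds because $q(j|i,a)$ and $\sum_kw(k)q(k|i,a)=(\lambda-\mu)i+(a_1-a_2)$ (for $i\ge1$) are affine, hence continuous, in $a$, while (C2) supplies the continuity of $c_n$. This establishes the first assertion through Theorem~\ref{thm4.4}. For the constrained case, (C4) gives $\gamma(w)\le\gamma(w^3)<\infty$ together with $\gamma(w^2),\gamma(w^3)<\infty$, which are Assumptions~\ref{ass3.2}(i) and \ref{ass5.1}(ii), while Assumption~\ref{ass5.1}(i) is clear since $S_m=\{0,1,\ldots,m-1\}$ is finite and $w(i)\to\infty$. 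Finally, for the Slater condition (Assumption~\ref{ass5.2}) I would take the randomized Markov policy $\widetilde{\pi}(\cdot|\,\omega,t):=\widehat{\varphi}(\cdot|\,\xi_{t-}(\omega),t)$ induced by the kernel $\widehat{\varphi}$ of (C3): since $\int_Ac_n(j,a)\widehat{\varphi}(da|j,t)<d_n$ holds pointwise and $\xi_t\in S$ almost surely, the conditional expectation of the integrand stays strictly below $d_n$, and integration in $t$ yields the strict feasibility required in Assumption~\ref{ass5.2}. Theorems~\ref{thm5.2} and \ref{thm5.3} then complete the proof.

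I expect the drift estimates for $p=2,3$ to be the main obstacle---not conceptually, but in the bookkeeping: one must confirm that the degree-$(p+1)$ terms truly cancel in the finite-difference form, that the death contribution can be dropped uniformly in $a$, and that the degenerate boundary state $i=0$ is absorbed into the constants. Everything else is routine substitution into the conditions of Sections~3 and 5.
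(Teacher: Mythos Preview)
Your approach is essentially the paper's: take $w(i)=i+1$ and verify each assumption by direct computation. Your finite-difference organization of the drift bounds,
\[
\sum_{j\in S}w^p(j)q(j|i,a)=(\lambda i+a_1)\bigl[w^p(i+1)-w^p(i)\bigr]+(\mu i+a_2)\bigl[w^p(i-1)-w^p(i)\bigr],
\]
with the death term discarded by sign, is a clean structural shortcut; the paper instead expands each of the three sums explicitly and reads off concrete constants ($\rho_1=\lambda+\mu$, $\rho_2=7\lambda+5\mu$, $\rho_3=31\lambda+13\mu$, etc.). Both routes are valid and yield admissible constants, yours in fact slightly sharper.

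There is, however, a slip in your Slater verification. With $\widetilde\pi$ induced by $\widehat\varphi$, the pointwise bound $\int_{A(j)}c_n(j,a)\,\widehat\varphi(da|j,t)<d_n$ from (C3) gives, after integrating over $t\in[0,T]$ and taking expectation,
\[
V_n(\widetilde\pi)=E_\gamma^{\widetilde\pi}\left[\int_0^T\int_{A}c_n(\xi_{t-},a)\,\widehat\varphi(da|\xi_{t-},t)\,dt\right]<T d_n,
\]
not $V_n(\widetilde\pi)<d_n$; the factor $T$ from the time integral does not disappear. The paper does not spell out this step either---it simply asserts that (C3) yields Assumption~\ref{ass5.2}---so you are not omitting anything the paper actually proves, but the argument you supply does not close the gap as written. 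To make it go through one needs (C3) in the form $\int_{A(i)}c_n(i,a)\,\widehat\varphi(da|i,t)<d_n/T$, or an equivalent normalization.
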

\begin{proof}
Let $w(i):=i+1$ for all $i\in S$.
For $i=0$ and $a=(a_1,a_2)\in A(0)$, we have
\begin{eqnarray}
q^*(0)&\leq&2\lambda w(0),\nonumber\\
  \sum_{j\in S}w(j)q(j|0,a)&=&\lambda+a_1\leq \lambda w(0)+\lambda, \label{6.1}\\
  \sum_{j\in S}w^2(j)q(j|0,a)&=&3(\lambda+a_1)\leq 3\lambda w^2(0)+3\lambda,\nonumber\\
   \sum_{j\in S}w^3(j)q(j|0,a)&=&7(\lambda+a_1)\leq 7\lambda w^3(0)+7\lambda.\nonumber
\end{eqnarray}
For each $i\geq1$ and $a=(a_1,a_2)\in A(i)$, straightforward  calculations give
\begin{eqnarray}
q^*(i)&\leq&(\lambda+\mu)i+\lambda+\mu=(\lambda+\mu)w(i),\nonumber\\
\sum_{j\in S}w(j)q(j|i,a)&=&(\lambda-\mu)i+a_1-a_2\leq (\lambda+\mu)w(i),\label{6.2}\\
\sum_{j\in S}w^2(j)q(j|i,a)&=&2(\lambda-\mu)i^2+(3\lambda-\mu+2a_1-2a_2)i+3a_1-a_2\nonumber\\
&\leq& (7\lambda+5\mu)w^2(i)+3\lambda+\mu,\nonumber\\
\sum_{j\in S}w^3(j)q(j|i,a)&=&3(\lambda-\mu)i^3+(9\lambda-3\mu+3a_1-3a_2)i^2\nonumber\\
&&+(7\lambda-\mu+9a_1-3a_2)i+7a_1-a_2\nonumber\\
&\leq&(31\lambda+13\mu)w^3(i)+7\lambda+\mu.\nonumber
\end{eqnarray}
Hence, Assumptions \ref{ass3.1} and \ref{ass3.3}(i) are satisfied  with $L:=2\lambda+\mu$,  $\rho_1:=\lambda+\mu$, $b_1:=\lambda$, $\rho_2:=7\lambda+5\mu$, $b_2:=3\lambda+\mu$, $\rho_3:=31\lambda+13\mu$, and  $b_3:=7\lambda+\mu$.
Moreover, by the description of the model, (\ref{6.1}), (\ref{6.2}),   and conditions (C1)-(C4), we see that Assumptions \ref{ass3.2}, \ref{ass3.3}(ii)(iii), \ref{ass5.1}(ii), and \ref{ass5.2} hold. Finally, for each integer $m\geq1$, we obtain  $S_m=\{0,1,\ldots,m-1\}$ which implies Assumption \ref{ass5.1}(i). This completes the proof of the proposition.
\end{proof}
\begin{rem}
{\rm (a) Since the finite-horizon optimality criterion  is different from the average optimality criterion discussed in \cite{guo12}, the conditions used in Example \ref{exm6.1}  also differ from those in \cite{guo12}. For example, the condition ``$\lambda>\mu$"   in \cite{guo12} is not required
for Example \ref{exm6.1}, whereas there is no need to impose condition (C4) in \cite{guo12}.

(b) The transition rates in \cite{bau,gih,pli,yush} are assumed to be bounded. Therefore, the conditions in the aforementioned works are inapplicable to Example \ref{exm6.1} because it allows the transition  rates to be unbounded from above and from below.

}
\end{rem}
\section*{Acknowledgements}
The research was supported by NSFC.

\end{document}